\theoremstyle{definition}
\theoremstyle{remark}
\numberwithin{equation}{section}
\allowdisplaybreaks \allowdisplaybreaks[4]
\newtheorem{tm}{Theorem}[section]
\newtheorem{rk}{Remark}[section]
\newtheorem{df}{Definition}[section]
\newtheorem{ap}{Assumption}[section]
\newtheorem{prop}{Proposition}[section]
\newtheorem{cor}{Corollary}[section]
\newcommand{\E}{\mathbb E}
\newcommand{\N}{\mathbb N}
\newcommand{\R}{\mathbb R}
\begin{document}

\title[]
{Symplectic Runge--Kutta Methods for Hamiltonian Systems Driven by Gaussian Rough Paths}

\author[]{Jialin Hong}
\address{Academy of Mathematics and Systems Science, Chinese Academy of Sciences, Beijing, China}
\curraddr{}
\email{hjl@lsec.cc.ac.cn}
\thanks{Authors are supported by National Natural Science Foundation of China (NO. 91530118, NO. 91130003, NO. 11021101 and NO. 11290142).}
\author[]{Chuying Huang}
\address{Academy of Mathematics and Systems Science, Chinese Academy of Sciences, Beijing, China}
\curraddr{}
\email{huangchuying@lsec.cc.ac.cn (Corresponding author)}
\thanks{}
\author[]{Xu Wang}
\address{Academy of Mathematics and Systems Science, Chinese Academy of Sciences, Beijing, China}
\curraddr{}
\email{wangxu@lsec.cc.ac.cn}
\thanks{}

\subjclass[2010]{60G15, 60H35, 65C30, 65P10}

\keywords{rough path, Hamiltonian system, symplectic Runge--Kutta method, implicit method, pathwise convergence rate}

\date{\today}

\dedicatory{}

\begin{abstract}
We consider Hamiltonian systems driven by multi-dimensional Gaussian processes in rough path sense, which include fractional Brownian motions with Hurst parameter $H\in(1/4,1/2]$. We indicate that the phase flow preserves the symplectic structure almost surely and this property could be inherited by symplectic Runge--Kutta methods, which are implicit methods in general. If the vector fields belong to $Lip^{\gamma}$, we obtain the solvability of Runge--Kutta methods and the pathwise convergence rates. For linear and skew symmetric vector fields, we focus on the midpoint scheme to give corresponding results. Numerical experiments verify our theoretical analysis.
\end{abstract}

\maketitle

\section{Introduction}

We consider a stochastic differential equation (SDE)
\begin{align}\label{sde}
dY_{t}&=V(Y_{t})dX_{t}(\omega),\quad  t\in [0,T]
\end{align}
driven by multi-dimensional Gaussian process in rough path sense.
Under proper assumptions, $X$ can be lifted to a rough path almost surely and then (\ref{sde}) can be interpreted as a rough differential equation (RDE). For example, the lift of the fractional Brownian motion (fBm) with Hurst parameter $H>1/4$ is constructed by piecewise linear approximations and their iterated integrals. We refer to \cite{Friz,MHFriz} and references therein for more details.

The well-posedness of the RDE is given originally in \cite{Lyons} for the case that the vector fields belong to $Lip^{\gamma}$ (in the sense of E. Stein), i.e., they are bounded and smooth enough with bounded derivatives. When the vector fields are linear, the well-posedness result still holds (see e.g., \cite{Friz}). In particular, the solution is equivalent to that in Stratonovich sense almost surely when the noise is a semi-martingale. The robustness of the solution allows numerous researches to be developed, such as the density and ergodicity of SDEs driven by non-Markovian noises (see \cite{Density,MH} and references therein) and the theory of stochastic partial differential equations (see e.g., \cite{MHFriz}).

The author in \cite{Davie} develops an approximation approach to deal with the equation driven by non-differentiable paths, which are further investigated and called step-$N$ Euler schemes in \cite{Friz}. When the equation is driven by fBms with $H>1/2$, modified Euler schemes and Taylor schemes are analyzed in \cite{HuEuler,HuTaylor}. For rougher noises, the step-$N$ Euler schemes are not implementable from the numerical point of view, because they depend on iterated integrals of the noise, which are rather difficult to be simulated. The current implementable methods are the simplified step-$N$ Euler schemes, which are first proposed in \cite{Deya} for fBm with $H\in (1/3,1/2]$. Authors in \cite{Wzk} give the estimate of the pathwise convergence rate of Wong--Zakai approximations for Gaussian processes under proper assumptions and obtain convergence rates of the simplified step-$N$ Euler schemes. The results are applicable for fBm with $H\in (1/4,1/2]$. Further, the $L^{r}$-convergence rate is obtained in \cite{MLMC16} to reduce the complexity of multilevel Monte Carlo.

From the perspective of modeling, Gaussian noises with nontrivial correlations are more general than standard Brownian motions. For a dynamic system with additional nonconservative forces in \cite{Gold}, which could be generally assumed as Gaussian noises, its pathwise movement rule is described by Hamiltonian equations in rough sense.
In this paper, we are concerned about a Hamiltonian system driven by multi-dimensional Gaussian process under certain assumption. The fBm with $H\in (1/4,1/2]$ satisfies our assumption with $\rho=\frac{1}{2H}$, where $\rho$ is a parameter related to the regularity of the noise.
We prove that the rough Hamiltonian system, as a generalization of the classical case in \cite{GeometricEH} and the stochastic case driven by standard Brownian motions in \cite{Milstein,additive}, also has the characteristic property of the Hamiltonian system that its phase flow preserves the symplectic structure almost surely. The symplecticity is naturally considered to be inherited by numerical methods. However, the simplified step-$N$ Euler schemes, which are explicit, are failed to possess discrete symplectic structure. To our best knowledge, there has been few work about implicit methods for RDEs driven by noises rougher than standard Brownian motions.

We make use of symplectic Runge--Kutta methods to preserve the symplectic structure of the original rough Hamiltonian system. Since these methods are implicit methods in general, the solvability of the numerical methods should be taken into consideration, which is one of differences between explicit methods and implicit methods. For SDEs driven by Brownian motions, authors in \cite{Milstein} truncate each increment of Brownian motions to avoid the possibility that the increments could become unbounded, and give the solvability of implicit methods and convergence rates in mean square sense. However, this truncation skill is not applicable here, as the convergence is considered in pathwise sense. Based on the Brouwer's theorem, we prove that if the vector fields belong to $Lip^{\gamma}$, then Runge--Kutta methods are solvable for any time step. If the vector fields are linear and skew symmetric, we get the solvability of the $1$-stage symplectic Runge--Kutta method---the midpoint scheme.

Furthermore, we analyze pathwise convergence rates for symplectic Runge--Kutta methods. For the $Lip^{\gamma}$ case, we obtain that the convergence rate of the midpoint scheme is $(\frac{3}{2\rho}-1-\varepsilon)$, for arbitrary small $\varepsilon$ and $\rho\in[1,3/2)$. The convergence rate of another two symplectic Runge-Kutta methods, whose stages are higher, is $(\frac{1}{\rho}-\frac{1}{2}-\varepsilon)$ for $\rho\in[1,2)$. This is limited by the convergence rate of Wong--Zakai approximations, similar to the simplified step-$3$ Euler scheme in \cite{Wzk}. If the vector fields are linear and skew symmetric, we prove the uniform boundedness of numerical solutions for the midpoint scheme and have that its convergence rate is also $(\frac{3}{2\rho}-1-\varepsilon)$ when $\rho\in[1,3/2)$.

This paper is organized as follows. In Section \ref{sec2}, we introduce notations and definitions in rough path theory as well as the result of the well-posedness of RDEs, which contains both the $Lip^{\gamma}$ case and the linear case. In Section \ref{sec3}, we derive Hamiltonian equations with additional nonconservative forces through the variational principle to obtain the rough Hamiltonian system and prove that the phase flow preserves the symplectic structure almost surely. In Section \ref{sec4}, we propose symplectic Runge--Kutta methods for rough Hamiltonian systems and give the solvability of them. In Section \ref{sec5}, we analyze pathwise convergence rates for symplectic Runge--Kutta methods. Numerical experiments in Section \ref{sec6} verify our theoretical analysis.

\section{Preliminaries in rough path theory}\label{sec2}

We first recall some notations in rough path theory. For $p \in [1,\infty)$,
we are interested in a continuous map $\textbf{X}:[0,T]\rightarrow G^{[p]}(\R^{d})$, where $[p]$ is an integer satisfying $p-1< [p]\leq p$ and $G^{[p]}(\R^{d})$ is the free step-$[p]$ nilpotent Lie group of $\R ^{d}$, equipped with the Carnot-Carath\'{e}odory metric $\rm d$.

Since the group $G^{[p]}(\R^{d})$  is embedded in the truncated step-$[p]$ tensor algebra, i.e., $G^{[p]}(\R^{d})\subset \bigoplus _{n=0}^{[p]}(\R^{d})^{\otimes n}$ with $(\R^{d})^{\otimes 0}=\R$, the increment of $\textbf{X}$ is defined by $\textbf{X}_{s,t}:=\textbf{X}_{s}^{-1}\otimes\textbf{X}_{t}$. Denoting by $\pi_{1}(G^{[p]}(\R^{d}))$ the projection of $G^{[p]}(\R^{d})$ into $\R^{d}$, we have $\pi_{1}(\textbf{X}_{s,t})=\pi_{1}(\textbf{X}_{t})-\pi_{1}(\textbf{X}_{s})$.

Define the $p$-variation of $\textbf{X}$ by
\begin{align*}
\|\textbf{X}\|_{p\text{-}var;[s,t]}:=\sup_{(t_{k}) \in \mathcal{D}([s,t])}\left(\sum_{t_{k}} {\rm d}( \textbf{X}_{t_{k}},\textbf{X}_{t_{k+1}})^{p}\right)^{1/p},
\end{align*}
where $\mathcal{D}([s,t])$ is the set of dissections of $[s,t]$. We call $\textbf{X}$ a weak geometric $p$-rough path if $$\textbf{X}\in\mathcal{C}^{p\text{-}var}([0,T],G^{[p]}(\R^{d})):=\{\textbf{X}:\|\textbf{X}\|_{p\text{-}var;[0,T]}<\infty\},$$
where the set $\mathcal{C}^{p\text{-}var}([0,T],G^{[p]}(\R^{d}))$ contains drivers of RDEs.
In addition, we say $\textbf{X}$ is of H$\rm \ddot{o}$lder-type if
\begin{align*}
\ \|\textbf{X}\|_{1/p\text{-} {\rm H\ddot{o}l};[0,T]}:=\sup_{0\leq s<t\leq T}\dfrac{{\rm d}(\textbf{X}_{s},\textbf{X}_{t})}{|t-s|^{1/p}} < \infty.
\end{align*}
This implies $|\pi_{1}(\textbf{X}_{s,t})|\leq \|\textbf{X}\|_{1/p\text{-}{\rm H\ddot{o}l};[0,T]}|t-s|^{1/p}$ for any $0\leq s<t\leq T$.
Similar to the $1/p$-H$\rm \ddot{o}$lder continuity in classical case, a larger $p$ implies lower regularity of $\textbf{X}$.

The solution of an RDE is constructed by means of a sequence of bounded variation functions on $\R^{d}$, which are appropriate approximations of $\textbf{X}$. Let $x:[0,T]\rightarrow\R^{d}$ be a bounded variation function, we consider its canonical lift to a weak geometric $p$-rough path defined by $S_{[p]}(x)$ with
$$S_{[p]}(x)_{t}:=(1,\int_{0\leq u_{1}\leq t}dx_{u_{1}},...,\int_{0\leq u_{1}<\cdot\cdot\cdot<u_{[p]}\leq t}dx_{u_{1}}\otimes\cdot\cdot\cdot \otimes dx_{u_{[p]}})\in G^{[p]}(\R^{d}).$$
One can observe that $S_{[p]}(x)$ contains information about iterated integrals of $x$ up to $[p]$-level, which is corresponding to the regularity of $\textbf{X}$. If $S_{[p]}(x)$ is close to $\textbf{X}$ in $p$-variation sense, then $x$ is a proper approximation of $\textbf{X}$. In other words, the rougher $\textbf{X}$ is the more information is required.

We now introduce the definition of the solution of an RDE
\begin{align}\label{rde}
dY_{t}=V(Y_{t})d\textbf{X}_{t},\quad Y_{0}=z\in \R^{m}.
\end{align}

\begin{df}(\cite[Definition 10.17]{Friz})\label{solution}
Let $p\in [1,\infty)$ and $\textbf{X} \in \mathcal{C}^{p\text{-}var}([0,T],G^{[p]}(\R^{d}))$. Suppose there exists a sequence of bounded variation functions $\{x^{n}\}$ on $\R^{d}$, such that
\begin{equation*}\label{xn}
\begin{split}
\sup_{n\in \N}\|S_{[p]}(x^{n})\|_{p\text{-}var;[0,T]} < \infty,\\
\lim_{n\rightarrow \infty} \sup_{0\leq s<t\leq T} {\rm d}(S_{[p]}(x^{n})_{s,t},\textbf{X}_{s,t})=0.
\end{split}
\end{equation*}
Suppose in addition $\{y^{n}\}$ are solutions of  equations $dy^{n}_{t}=V(y^{n}_{t})dx^{n}_{t}$, $y^{n}_{0}=z$, in Riemann-Stieltjes integral sense. If $y^{n}_{t}$ converges to $Y_{t}$ in $L^{\infty}([0,T])$-norm, then we call $Y_{t}$ a solution of (\ref{rde}).
\end{df}

To ensure the well-posedness of an RDE, proper assumptions are given for $V$, which will be described by the notation $Lip^{\gamma}$. Throughout this paper, $|\cdot|$ is the Euclidean norm, and we will use $C$ as generic constants, which may be different from line to line.

\begin{df}
Let $\gamma>0$ and $\lfloor \gamma \rfloor$ be the largest integer strictly smaller than $\gamma$, i.e., $\gamma-1\leq\lfloor \gamma \rfloor<\gamma$. We say $V \in Lip^{\gamma}$, if $V$ is $\lfloor \gamma \rfloor$-Fr$\rm\acute{e}$chet differentiable and the $k$th-derivative of $V$, $D^{k}V$, satisfies that
\begin{equation*}
\begin{split}
&|D^{k}V(y)|\leq C,\ \forall k=0,..., \lfloor \gamma \rfloor,\quad \forall\ y \in \R^{m},\\
&|D^{\lfloor \gamma \rfloor }V(y_{1})-D^{\lfloor \gamma \rfloor }V(y_{1})|\leq C |y_{1}-y_{2}|^{\gamma - \lfloor \gamma \rfloor},\quad \forall\ y_{1},y_{2}\in \R^{m},
\end{split}
\end{equation*}
for some constant $C$.
The smallest constant $C$ satisfying these two inequalities is denoted by $|V|_{Lip^{\gamma}}$.
\end{df}

In the sequel, we give the theorem of the well-posedness of an RDE.
\begin{tm}\label{well}
Let  $\textbf{X}=(\textbf{X}^{i})_{1\leq i\leq d} \in \mathcal{C}^{p\text{-}var}([0,T],G^{[p]}(\R^{d}))$. If $V=(V_{i})_{1\leq i\leq d}$ is a collection of vector fields in $Lip^{\gamma}$ with $\gamma>p$, or a collection of linear vector fields of the form $V_{i}(Y)=A_{i}Y$, then (\ref{rde}) has a unique solution on $[0,T]$. Moreover, the Jacobian of the flow, $\frac{\partial Y_{t}}{\partial z}$, exists and satisfies the linear RDE
\begin{align}\label{linear}
d\frac{\partial Y_{t}}{\partial z}=\sum^{d}_{i=1}DV_{i}(Y_{t})\frac{\partial Y_{t}}{\partial z}d\textbf{X}^{i}_{t},\quad \frac{\partial Y_{0}}{\partial z}=I_{m}.
\end{align}
\end{tm}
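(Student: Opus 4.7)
The plan is to realize $Y$ as the It\^{o}-Lyons limit of classical Riemann-Stieltjes solutions to ODEs driven by smooth approximations of $\textbf{X}$, following Lyons' universal limit theorem. First, pick any sequence $\{x^{n}\}$ of bounded variation paths on $\R^{d}$ with $\sup_{n}\|S_{[p]}(x^{n})\|_{p\text{-}var;[0,T]} < \infty$ and $S_{[p]}(x^{n}) \to \textbf{X}$ uniformly in the Carnot-Carath\'{e}odory distance; such a sequence always exists for a weak geometric rough path via geodesic interpolation on $G^{[p]}(\R^{d})$. For $V \in Lip^{\gamma}$ with $\gamma > p$, the ODE $dy^{n}_{t} = V(y^{n}_{t}) dx^{n}_{t}$ has a unique classical solution $y^{n}$ on $[0,T]$ because $V$ is bounded and Lipschitz. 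The crucial analytic step is a uniform estimate
\[
\|S_{[p]}(y^{n})\|_{p\text{-}var;[0,T]} \le C\bigl(|V|_{Lip^{\gamma}}, T, \|S_{[p]}(x^{n})\|_{p\text{-}var;[0,T]}\bigr),
\]
obtained via Davie's $[p]$-step Taylor expansion, bounding the local remainders by a power of the $p$-variation of $\textbf{X}$ on short intervals and then patching via a Gronwall-type super-additivity argument. Interpolation then places $\{S_{[p]}(y^{n})\}$ in a compact subset of $\CC^{p'\text{-}var}$ for any $p' > p$, and the local Lipschitz continuity of the solution map in the $p$-variation distance yields a Cauchy-in-$L^{\infty}$ limit $Y$ independent of the approximating sequence; this $Y$ is a solution in the sense of Definition \ref{solution}, and uniqueness follows from the same continuity estimate.

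For the linear case $V_{i}(Y) = A_{i} Y$ the quantity $|V|_{Lip^{\gamma}}$ is infinite, so I would replace the Davie estimate by one specific to linear equations: iterating Duhamel's formula along dissections gives $\|y^{n}\|_{\infty;[0,T]} \le \exp\bigl(C\|x^{n}\|_{1\text{-}var;[0,T]}\bigr)$, and a more careful bookkeeping of iterated integrals of the $A_{i}$ yields a $p$-variation bound on $y^{n}$ controlled only by $\|S_{[p]}(x^{n})\|_{p\text{-}var;[0,T]}$. The remainder of the argument then proceeds as in the $Lip^{\gamma}$ case. For the Jacobian (\ref{linear}), I would view $(Y_{t}, J_{t})$ with $J_{t} := \partial Y_{t}/\partial z$ as the solution of the coupled RDE
\[
d\!\begin{pmatrix} Y_{t} \\ J_{t} \end{pmatrix} = \sum_{i=1}^{d}\begin{pmatrix} V_{i}(Y_{t}) \\ DV_{i}(Y_{t}) J_{t} \end{pmatrix} d\textbf{X}^{i}_{t},\quad \begin{pmatrix} Y_{0} \\ J_{0}\end{pmatrix} = \begin{pmatrix} z \\ I_{m} \end{pmatrix};
\]
the coefficient is $Lip^{\gamma-1}$ in $Y$ and linear in $J$, so existence of $J$ follows by first solving for $Y$ and then solving the \emph{linear} RDE for $J$ whose coefficient $DV_{i}(Y_{t})$ is bounded (or linear in $Y$, handled by the linear case itself). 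That the resulting $J$ really is the Jacobian is checked by differentiating the smooth approximants, noting that $\partial y^{n}_{t}/\partial z$ satisfies the analogous ODE, and passing to the limit.

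I expect the main obstacle to be the uniform $p$-variation estimate on $y^{n}$, in both settings. In the $Lip^{\gamma}$ case it is the heart of Davie's lemma and uses the full strength of $\gamma > p$ through a $[p]$-step Taylor expansion that captures all levels of $\textbf{X}$ simultaneously; in the linear case the vector fields are unbounded, so the bound must instead be extracted from the algebraic structure of iterated integrals together with an exponentiation argument. Once this a priori estimate is in place, existence, uniqueness, and the Jacobian statement all follow from the same limiting procedure applied to the appropriate approximating ODEs.
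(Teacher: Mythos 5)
Your outline is mathematically sound, but it takes a noticeably different route from the paper: the paper does not re-derive any of the rough-path machinery, it simply cites Friz--Victoir (Theorem 10.14/10.26 for existence, uniqueness and the Jacobian in the $Lip^{\gamma}$ case, Theorem 10.53 for linear vector fields) together with Proposition 1 of the reference on densities, and the only genuine argument it supplies is for the Jacobian in the linear case: using the a priori bound (\ref{Y}) it localizes, replacing $V$ by a compactly supported $\tilde V\in Lip^{\gamma}$ that agrees with $V$ on a ball containing the flow started from a neighborhood of $z_{0}$, thereby reducing the linear case to the already-cited $Lip^{\gamma}$ case. You instead sketch a self-contained proof of those cited theorems (Davie-type uniform $p$-variation estimates, the universal limit theorem, an exponential bound for linear equations) and treat the linear-case Jacobian directly through the constant-coefficient linear RDE $dJ=\sum_{i}A_{i}J\,d\textbf{X}^{i}$, which is cleaner than the paper's truncation trick but amounts to reproving standard results rather than invoking them. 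One point to tighten if you pursue your route: the ``linear RDE for $J$ with coefficient $DV_{i}(Y_{t})$'' is not an autonomous RDE, since the coefficient is itself only a rough function of time; to make sense of it one must drive the equation by the joint lift of $(\textbf{X},Y)$ (or work with the full system as in Friz--Victoir's flow-differentiability theorems), and your fallback of differentiating the approximating ODEs and passing to the limit is indeed the standard way to justify both the interpretation of (\ref{linear}) and the fact that the limit of $\partial y^{n}_{t}/\partial z$ is the derivative of the limiting flow; that limit-interchange step needs the locally uniform (in $z$) convergence supplied by the Lipschitz estimates of the It\^{o}--Lyons map, so state it explicitly. In short, both approaches reach the same theorem: the paper buys brevity by citation plus localization, while your argument buys self-containedness at the cost of reconstructing the underlying theory.
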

\begin{proof}
For the case of $V\in Lip^{\gamma}$, the existence and uniqueness of the solution follow from Theorem 10.14 and Theorem 10.26 in \cite{Friz}. From
Theorem 10.26 in \cite{Friz} and Propsition 1 in \cite{Density}, we know that $\frac{\partial Y_{t}}{\partial z}$ exists and satisfies (\ref{linear}) with a bound
\begin{align*}
\sup_{t\in[0,T]}\left|\frac{\partial Y_{t}}{\partial z}\right|\leq |I_{m}| + C\nu_{1} \|\textbf{X}\|_{p\text{-}var;[0,T]}\exp\{C\nu_{1}^{p}\|\textbf{X}\|^{p}_{p\text{-}var;[0,T]}\},
\end{align*}
where $\nu_{1}\geq|V|_{Lip^{\gamma}}$ and $C$ depends only on $p$.

In the linear case, for any fixed initial value $z_{0}$, the existence and uniqueness of the solution follow from Theorem 10.53 in \cite{Friz}. It implies that $Y$ will not blow up on $[0,T]$ and
\begin{align}\label{Y}
\sup_{t\in[0,T]}|Y_{t}|\leq \theta + C(1+\theta)\nu_{2} \|\textbf{X}\|_{p\text{-}var;[0,T]}\exp\{c\nu_{2}^{p}\|\textbf{X}\|^{p}_{p\text{-}var;[0,T]}\}=:R(\theta),
\end{align}
where $\nu_{2}\geq\max\{|A_{i}|:i=0,...,d\}$, $\theta\geq|z_{0}|$ and $C$ depends only on $p$. This guarantees that we can localize the problem as Theorem 10.21 in \cite{Friz}.
Considering the set $\{z:|z|\leq 2\theta \}$, which is a neighborhood of $z_{0}$, and replacing $V$ by a compactly supported $\tilde{V}\in Lip^{\gamma}$ which coincides with $V$ in the ball $B_{2R}:=\{y:|y|\leq 2R(\theta)\}$, we turn the problem into the first case and obtain that $\frac{\partial Y_{t}}{\partial z}\mid_{z=z_{0}}$ exists and satisfies (\ref{linear}).
\end{proof}

\section{Symplectic structure of rough Hamiltonian system}\label{sec3}

In this section, we derive autonomous Hamiltonian equations with additional nonconservative forces via a modified Hamilton's principle. Under Assumption \ref{R}, we show that these equations form  a rough Hamiltonian system when the nonconservative forces are characterized by rough signals. Further, this rough Hamiltonian system possesses the symplectic structure similar to the deterministic case.

The classical Hamilton's principle indicates that the motion $Q(t)$ extremizes the action functional $S(Q):=\int_0^TL(Q,\dot Q)dt$ with $\delta Q(0)=\delta Q(T)=0$ under variation, where $L$ is the Lagrangian of a deterministic Hamiltonian system.
The author in \cite{Gold} gives the modified Hamilton's principle
\begin{align*}
\delta\int_0^TP\dot Q-H(P,Q)dt=0
\end{align*}
based on the Legendre transform $H(P,Q)=P\dot Q-L(Q,\dot Q)$.
However, for a Hamiltonian system influenced by additional nonconservative forces, its Hamiltonian energy turns to be
\begin{align*}
H_0(P,Q)+\sum^{d}_{i=1}H_{i}(P,Q)\dot\chi^{i}
\end{align*}
instead of $H(P,Q)$. The second term is the total work done by the additional forces and $\dot{\chi}^{i}=\frac{dX^{i}_{t}}{dt}$ represents a formal time derivative of $X^{i}$. In this case, the modified Hamilton's principle reads
\begin{align*}\label{deltaS}
\delta{S}&=\delta\int_0^TP\dot Q-H_0(P,Q)-\sum^{d}_{i=1}H_{i}(P,Q)\dot\chi^{i} dt\\
&=\int_0^T P\delta\dot Q+\dot Q\delta P-\frac{\delta H_0}{\delta P}\delta P-\frac{\delta H_0}{\delta Q}\delta Q-\sum^{d}_{i=1}\left(\frac{\delta H_i}{\delta P}\dot\chi^{i}\delta P+\frac{\delta H_i}{\delta Q}\dot\chi^{i}\delta Q\right) dt\equiv0.
\end{align*}
Integration by parts eventually yields the Hamiltonian equations of the motion with additional forces:
\begin{equation*}
\dot P=-\frac{\partial H_0(P,Q)}{\partial Q}-\sum^{d}_{i=1}\frac{\partial H_i(P,Q)}{\partial Q}\dot\chi^{i},\
\dot Q=\frac{\partial H_0(P,Q)}{\partial P}+\sum^{d}_{i=1}\frac{\partial H_i(P,Q)}{\partial P}\dot\chi^{i},
\end{equation*}
which can be rewritten as
\begin{equation}\label{symeq}
\begin{split}
dP=&-\frac{\partial H_0(P,Q)}{\partial Q}dt-\sum^{d}_{i=1}\frac{\partial H_i(P,Q)}{\partial Q}d X^{i},\quad P_{0}=p,\\
dQ=&\frac{\partial H_0(P,Q)}{\partial P}dt+\sum^{d}_{i=1}\frac{\partial H_i(P,Q)}{\partial P}d X^{i},\quad Q_{0}=q.
\end{split}
\end{equation}
Suppose $P=(P^{1},...,P^{m})^\top,Q=(Q^{1},...,Q^{m})^\top$, $p=(p^{1},...,p^{m})^\top,q=(q^{1},...,q^{m})^\top$ $\in \R^{m}$. In order to get a compact form of (\ref{symeq}), we denote $Y=(P^\top,Q^\top)^\top $, $z=(p^\top,q^\top)^\top$ $\in \R^{2m}$, $X_{t}^{0}=t$, $X_{t}=(X^{0}_{t},X^{1}_{t},X^{2}_{t},...,X^{d}_{t})^\top \in \R ^{d+1}$ and
$V_{i}=(V_{i}^{1},...,V_{i}^{2m})=(-\frac{\partial H_{i}}{\partial Q^{1}},...,-\frac{\partial H_{i}}{\partial Q^{m}},\frac{\partial H_{i}}{\partial P^{1}},...,\frac{\partial H_{i}}{\partial P^{m}})$,
$i=0,...,d$. Then (\ref{symeq}) is equivalent to
\begin{align}\label{sdeH}
dY_{t}=\sum^{d}_{i=0}V_{i}(Y_{t})dX^{i}_{t}=V(Y_{t})dX_{t},\quad  Y_{0}=z.
\end{align}

In the sequel, we consider $X^{i}_{t}=X^{i}_{t}(\omega)$, $i=1,...,d$, as Gaussian noises under the following assumption.
\begin{ap}\label{R}
Let $X^{i}_{t}$, $i=1,...,d$, be independent centered Gaussian processes with continuous sample path on $[0,T]$.
There exist some $\rho \in [1,2)$ and $K \in (0,+ \infty)$ such that for any $0\leq s<t\leq T$, the covariance of $X$ satisfies
\begin{align*}
\sup_{(t_{k}),(t_{l}) \in \mathcal{D}([s,t])}\left(\sum_{t_{i},t_{l}}|\E X_{t_{k},t_{k+1}} X_{t_{l},t_{l+1}}|^{\rho}\right)^{1/ \rho} \leq K |t-s|^{1/ \rho},
\end{align*}
where $X_{t_{k},t_{k+1}}=X_{t_{k+1}}-X_{t_{k}}$.
\end{ap}

For any $p > 2\rho$, by piecewise linear approximations, $X_{t}$ can be naturally lifted to a H$\rm \ddot{o}$lder-type weak geometric $p$-rough path $\textbf{X}_{t}$ almost surely, which takes values in $G^{[p]}(\R^{d+1})$ and $\pi_{1}(\textbf{X}_{s,t})=X_{s,t}$ (see e.g., \cite[Theorem 15.33]{Friz}). Therefore, (\ref{symeq}) or (\ref{sdeH}) can be transformed into an RDE almost surely and interpreted in the framework of rough path theory. In this sense, we call (\ref{symeq}) or (\ref{sdeH}) a rough Hamiltonian system.

The fBm with Hurst parameter $H \in (1/4,1/2]$ satisfies Assumption \ref{R} with $\rho=\frac{1}{2H}$, which is frequently used in practical applications. When $X_{t}^{i}$, $i=1,...,d$, are Brownian motions, i.e., $H=1/2$, $Y_{t}$ equals to the solution of a stochastic Hamiltonian system driven by standard Brownian motions (see \cite{additive,Milstein}) in Stratonovich sense almost surely.

As a function of time $t$ and initial value $z$, $Y$ is a phase flow for almost every $\omega$. In the deterministic case and the stochastic case, we know that the phase flow preserves the symplectic structure, i.e., the differential $2$-form $dP \wedge dQ$ is invariant. Note that the differential here is made with respect to the initial value, which is different from the formal time derivative in (\ref{symeq}). The geometric interpretation is that the sum of the oriented areas of a two-dimensional surface, obtained by projecting the phase flow onto the coordinate planes ($p^{1},q^{1}$),...,($p^{m},q^{m}$), is an integral invariant. The rough Hamiltonian systems, a generalization for the deterministic case and the stochastic case but allowing rougher noises, share this characteristic property of Hamiltonian systems, which is proved in the next Theorem.


\begin{tm}\label{symp}
The phase flow of the rough Hamiltonian system (\ref{symeq}) preserves the symplectic structure:
\begin{align*}
dP \wedge dQ=dp\wedge dq,\quad a.s.
\end{align*}
\end{tm}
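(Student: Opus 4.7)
The plan is to reduce the $2$-form identity $dP \wedge dQ = dp \wedge dq$ to the matrix identity $J_{t}^{\top}\Omega J_{t} = \Omega$, where $J_{t} := \partial Y_{t}/\partial z$ is the Jacobian of the phase flow and $\Omega = \left(\begin{smallmatrix} 0 & -I_{m} \\ I_{m} & 0 \end{smallmatrix}\right)$ is the standard symplectic matrix on $\R^{2m}$. This is the classical algebraic characterization of symplecticity, and Theorem \ref{well} already supplies the existence of $J_{t}$ almost surely, together with the linear RDE (\ref{linear}) that it satisfies, with $J_{0} = I_{2m}$.

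The key algebraic input is the Hamiltonian structure of the vector fields. Since $V_{i} = (-\partial_{Q} H_{i}, \partial_{P} H_{i})^{\top}$, the Jacobian $DV_{i}$ is a $2m\times 2m$ block matrix whose blocks are mixed Hessian components of $H_{i}$ (with appropriate signs), and a direct computation using the symmetry of mixed partials yields the infinitesimal symplecticity relation
$$(DV_{i}(Y))^{\top}\Omega + \Omega\, DV_{i}(Y) = 0, \qquad i = 0, 1, \ldots, d,$$
for every $Y \in \R^{2m}$.

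For the main step I would use the approximation framework of Definition \ref{solution}. Let $\{x^{n}\}$ be bounded-variation approximations of $\mathbf{X}$ and let $y^{n}$ denote the corresponding Riemann--Stieltjes solutions of $dy_{t}^{n} = V(y_{t}^{n})\,dx_{t}^{n}$. For each $n$ this is a classical ODE whose flow is a composition of Hamiltonian flows driven by $V_{0},\ldots,V_{d}$, so its Jacobian $J_{t}^{n}$ satisfies
$$\frac{d}{dt}\bigl[(J_{t}^{n})^{\top}\Omega J_{t}^{n}\bigr] = \sum_{i=0}^{d}(J_{t}^{n})^{\top}\bigl[(DV_{i})^{\top}\Omega + \Omega DV_{i}\bigr]J_{t}^{n}\,\dot x_{t}^{n,i} = 0,$$
together with $J_{0}^{n} = I_{2m}$, hence $(J_{t}^{n})^{\top}\Omega J_{t}^{n} \equiv \Omega$ for every $t \in [0,T]$ and every $n$.

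Finally, I would pass to the limit $n \to \infty$ in this identity. The continuity of the It\^o--Lyons map applied to the coupled system (\ref{rde})--(\ref{linear}), as invoked in the proof of Theorem \ref{well} via Theorem 10.26 of \cite{Friz} and Proposition 1 of \cite{Density}, yields $J_{t}^{n} \to J_{t}$ almost surely, and the identity $J_{t}^{\top}\Omega J_{t} = \Omega$ is inherited in the limit. The main obstacle is precisely this last step: one must exploit rough-path continuity of the Jacobian, not merely of the solution $Y$ itself; once this rough-path continuity is in place the proof concludes immediately.
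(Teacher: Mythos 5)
Your proposal is correct in substance, and its algebraic heart is the same as the paper's: the relation $(DV_{i})^{\top}\Omega+\Omega\,DV_{i}=0$, which forces $J_{t}^{\top}\Omega J_{t}$ to be constant, is exactly the coordinate-free form of the paper's verification that the quadratic combinations $\sum_{j}(P^{jk}_{p}Q^{jl}_{q}-P^{jl}_{q}Q^{jk}_{p})$ etc.\ have vanishing differential. Where you differ is the analytic mechanism: the paper works directly at the rough level, writing the variational RDEs for $P^{jk}_{p},Q^{jk}_{p},P^{jk}_{q},Q^{jk}_{q}$ (the componentwise form of (\ref{linear})) and checking that the rough differential of each quadratic invariant is zero, which implicitly uses the first-order Leibniz rule valid for geometric rough paths; you instead perform the classical computation along the bounded-variation approximations of Definition \ref{solution} and pass to the limit. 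Your route is legitimate, but note that the limit step needs strictly more than the convergence $y^{n}\to Y$ built into Definition \ref{solution}: you must invoke continuity of the derivative of the flow with respect to the driver (e.g.\ the flow/Jacobian stability results of \cite{Friz}, Chapter 11, consistent with the paper's use of Theorem 10.26 of \cite{Friz} and Proposition 1 of \cite{Density}; in the linear case one localizes as in the proof of Theorem \ref{well}), so this citation should be made explicit rather than left as an acknowledged obstacle. Two further small points: the approximating ODE flow is not literally a ``composition of Hamiltonian flows driven by $V_{0},\dots,V_{d}$'' since all components of $x^{n}$ move simultaneously, but your displayed derivative computation is the correct justification and does not need that claim; and the drift field $V_{0}$ is Hamiltonian as well, so the infinitesimal symplecticity indeed covers $i=0$. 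In exchange for needing the Jacobian-stability theorem, your argument makes explicit the limiting mechanism that the paper's direct rough-level product-rule computation leaves tacit, so the two proofs are complementary rather than identical.
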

\begin{proof}
From Theorem \ref{well}, we know that $P$ and $Q$ are differentiable with respect to $p$ and $q$.
Denote $P^{jk}_{p}=\frac{\partial P^{j}}{\partial p^{k}}$, $Q^{jk}_{p}=\frac{\partial Q^{j}}{\partial p^{k}}$,
$P^{jk}_{q}=\frac{\partial P^{j}}{\partial q^{k}}$, $Q^{jk}_{q}=\frac{\partial Q^{j}}{\partial q^{k}}$.
Since $dP^{j}=\sum\limits^{m}_{k=1}P^{jk}_{p}dp^{k}+\sum\limits^{m}_{l=1}P^{jl}_{q}dq^{l}$ and $dQ^{j}=\sum\limits^{m}_{k=1}Q^{jk}_{p}dp^{k}+\sum\limits^{m}_{l=1}Q^{jl}_{q}dq^{l}$, we have
\begin{align*}
dP \wedge dQ=&\sum^{m}_{j=1}dP^{j} \wedge dQ^{j}\\
=&\sum^{m}_{j=1}\sum^{m}_{k=1}\sum^{m}_{l=1}\left(P^{jk}_{p}Q^{jl}_{q}-P^{jl}_{q}Q^{jk}_{p}\right)dp^{k}\wedge dq^{l}\\
&+\sum^{m}_{j=1}\sum^{m}_{k=1}\sum^{k-1}_{l=1}\left(P^{jk}_{p}Q^{jl}_{p}-P^{jl}_{p}Q^{jk}_{p}\right)dp^{k}\wedge dq^{l}\\
&+\sum^{m}_{j=1}\sum^{m}_{k=1}\sum^{k-1}_{l=1}\left(P^{jk}_{q}Q^{jl}_{q}-P^{jl}_{q}Q^{jk}_{q}\right)dp^{k}\wedge dq^{l}.
\end{align*}
Meanwhile, the time derivatives of $P^{jk}_{p}$,  $Q^{jk}_{p}$, $P^{jk}_{q}$, $Q^{jk}_{q}$ yield
\begin{equation*}
\begin{split}
&dP^{jk}_{p}=\sum_{i=0}^{d}\sum^{m}_{r=1}\left(
-\frac{\partial^{2} H_{i}}{\partial Q^{j}\partial P^{r}} P^{rk}_{p}
-\frac{\partial^{2} H_{i}}{\partial Q^{j}\partial Q^{r}} Q^{rk}_{p}
 \right)dX^{i},\ P^{jk}_{p}(t_{0})=\delta_{jk},\\
 &dQ^{jk}_{p}=\sum_{i=0}^{d}\sum^{m}_{r=1}\left(
\frac{\partial^{2} H_{i}}{\partial P^{j}\partial P^{r}} P^{rk}_{p}
+\frac{\partial^{2} H_{i}}{\partial P^{j}\partial Q^{r}} Q^{rk}_{p}
 \right)dX^{i},\ Q^{jk}_{p}(t_{0})=0,\\
 &dP^{jk}_{q}=\sum_{i=0}^{d}\sum^{m}_{r=1}\left(
-\frac{\partial^{2} H_{i}}{\partial Q^{j}\partial P^{r}} P^{rk}_{q}
-\frac{\partial^{2} H_{i}}{\partial Q^{j}\partial Q^{r}} Q^{rk}_{q}
 \right)dX^{i},\ P^{jk}_{q}(t_{0})=0,\\
 &dQ^{jk}_{q}=\sum_{i=0}^{d}\sum^{m}_{r=1}\left(
\frac{\partial^{2} H_{i}}{\partial P^{j}\partial P^{r}} P^{rk}_{q}
+\frac{\partial^{2} H_{i}}{\partial P^{j}\partial Q^{r}} Q^{rk}_{q}
 \right)dX^{i},\ Q^{jk}_{q}(t_{0})=\delta_{jk},\\
\end{split}
\end{equation*}
where all coefficients are calculated at $(P,Q)$.
Then one can check
\begin{equation*}
\begin{split}
&d \left(\sum^{m}_{j=1} \left( P^{jk}_{p}Q^{jl}_{q}- P^{jl}_{q}Q^{jk}_{p} \right) \right)=0,\ \forall\ k,l,\\
&d \left(\sum^{m}_{j=1} \left( P^{jk}_{p}Q^{jl}_{p}- P^{jl}_{p}Q^{jk}_{p} \right) \right)=0,\ \forall\ k\neq l,\\
&d \left(\sum^{m}_{j=1} \left( P^{jk}_{q}Q^{jl}_{q}- P^{jl}_{q}Q^{jk}_{q} \right) \right)=0,\ \forall\ k\neq l,
\end{split}
\end{equation*}
to get $\sum\limits^{m}_{j=1}dP^{j} \wedge dQ^{j}=\sum\limits^{m}_{j=1}dp^{j} \wedge dq^{j}$.
\end{proof}

Because of the symplecticity of the phase flow of a rough Hamiltonian system, it is natural to construct numerical methods to inherit this property. However, the simplified step-$N$ Euler schemes cannot inherit this property. Consequently, in the next three sections we will propose and analyze Runge--Kutta methods for rough Hamiltonian systems.

\section{Runge--Kutta methods}\label{sec4}
Given a time step $h$, we construct $s$-stage Runge--Kutta method by
\begin{equation}\label{RK}
\begin{split}
Y^{h}_{k}(\alpha)&=Y^{h}_{k}+\sum^{s}_{\beta=1}a_{\alpha\beta}V(Y^{h}_{k}(\beta))X_{t_{k},t_{k+1}},  \\
Y^{h}_{k+1}&=Y^{h}_{k}+\sum^{s}_{\alpha=1}b_{\alpha}V(Y^{h}_{k}(\alpha))X_{t_{k},t_{k+1}}
\end{split}
\end{equation}
with coefficients $a_{\alpha\beta}$, $b_{\alpha}$, $\alpha,\beta=1,...,s$, $t_{k}=kh$, $k=1,...,N$ and $Y^{h}_{0}=z$. Here, we suppose $N=T/h \in \N$ for simplicity. The numerical solution $Y^{h}_{k}$ if exists is an approximation for $Y_{t_{k}}$.

\begin{tm}\label{symcon}
The $s$-stage Runge--Kutta method (\ref{RK}) inherits the symplectic structure of a rough Hamiltonian system, if the coefficients satisfy
$$a_{\alpha\beta}b_{\alpha}+a_{\beta\alpha}b_{\beta}=b_{\alpha}b_{\beta},\ \forall\ \alpha,\beta=1,...,s.$$
\end{tm}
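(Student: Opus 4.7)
The plan is to establish the one-step identity $dP^h_{k+1}\wedge dQ^h_{k+1}=dP^h_k\wedge dQ^h_k$, where $d$ denotes exterior differentiation with respect to the initial data $z=(p^\top,q^\top)^\top$; an induction on $k$ then yields $dP^h_N\wedge dQ^h_N=dp\wedge dq$. By Theorem \ref{well} all stage values $Y^h_k(\alpha)$ and the updated $Y^h_{k+1}$ depend differentiably on $z$, so the exterior calculus used in the proof of Theorem \ref{symp} transfers verbatim to the numerical solution. To streamline notation I introduce $F_i(\alpha):=-\partial H_i/\partial Q(Y^h_k(\alpha))$, $G_i(\alpha):=\partial H_i/\partial P(Y^h_k(\alpha))$, and $\xi^i:=X^i_{t_k,t_{k+1}}$; then (\ref{RK}) reads $P^h_k(\alpha)=P^h_k+\sum_{\beta,i}a_{\alpha\beta}F_i(\beta)\xi^i$ and $Q^h_k(\alpha)=Q^h_k+\sum_{\beta,i}a_{\alpha\beta}G_i(\beta)\xi^i$, with the analogous formulas for $P^h_{k+1},Q^h_{k+1}$ using $b_\alpha$ in place of $a_{\alpha\beta}$.

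The core computation is to expand $dP^h_{k+1}\wedge dQ^h_{k+1}$ using bilinearity and antisymmetry of the wedge. This produces $dP^h_k\wedge dQ^h_k$ plus two remainders: a linear-in-$\xi$ piece $\sum_\alpha b_\alpha\sum_i\bigl[dP^h_k\wedge dG_i(\alpha)+dF_i(\alpha)\wedge dQ^h_k\bigr]\xi^i$, and a quadratic-in-$\xi$ piece $\sum_{\alpha,\beta,i,j}b_\alpha b_\beta\,dF_i(\alpha)\wedge dG_j(\beta)\,\xi^i\xi^j$. I would then rewrite the stage equation backwards to get $dP^h_k=dP^h_k(\alpha)-\sum_{\beta,j}a_{\alpha\beta}\,dF_j(\beta)\xi^j$ and its $Q$-counterpart, and substitute into the linear remainder. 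This creates two further quadratic contributions with coefficient $-b_\alpha a_{\alpha\beta}$; after relabelling the dummies $(\alpha,i)\leftrightarrow(\beta,j)$ in one of them, all quadratic pieces collect with the single scalar coefficient $b_\alpha b_\beta-b_\alpha a_{\alpha\beta}-b_\beta a_{\beta\alpha}$, which vanishes precisely by the symplectic hypothesis.

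What is left is $\sum_\alpha b_\alpha\sum_i\bigl[dP^h_k(\alpha)\wedge dG_i(\alpha)+dF_i(\alpha)\wedge dQ^h_k(\alpha)\bigr]\xi^i$. Expanding $dF_i(\alpha)$ and $dG_i(\alpha)$ by the chain rule in terms of $dP^h_k(\alpha),dQ^h_k(\alpha)$, the pure $dP\wedge dP$ and $dQ\wedge dQ$ contributions die by antisymmetry of the wedge, while the two mixed terms cancel after relabelling using the Clairaut symmetry $\partial^2 H_i/(\partial P^r\partial Q^s)=\partial^2 H_i/(\partial Q^s\partial P^r)$---this is exactly the same Hessian-symmetry cancellation that appeared in the proof of Theorem \ref{symp}. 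The main obstacle is not conceptual but purely combinatorial: one must keep careful track of the two distinct index families---stage label $\alpha,\beta\in\{1,\dots,s\}$ versus driver label $i,j\in\{0,\dots,d\}$---and interchange them at the right moment so that the symmetric quadratic form in $dF\wedge dG$ with coefficient $a_{\alpha\beta}b_\alpha+a_{\beta\alpha}b_\beta-b_\alpha b_\beta$ is exposed. No further rough-path input is needed once the step is fixed, because after freezing the increments $\xi^i$ the identity is entirely algebraic and follows from the Hamiltonian form of $V$.
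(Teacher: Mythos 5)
Your proposal is correct and follows essentially the same route as the paper's proof: expand $dP^{h}_{k+1}\wedge dQ^{h}_{k+1}$, back-substitute the stage equations for $dP^{h}_{k},dQ^{h}_{k}$, cancel the quadratic-in-increment terms via the coefficient condition $a_{\alpha\beta}b_{\alpha}+a_{\beta\alpha}b_{\beta}=b_{\alpha}b_{\beta}$, and kill the remaining stage terms by the symmetry of the Hessians of $H_{i}$, exactly as in the classical Runge--Kutta symplecticity argument with $X^{i}_{t_k,t_{k+1}}$ in place of the deterministic step. The only cosmetic difference is your cleaner $F,G,\xi$ notation versus the paper's componentwise Hessian expressions (and your attribution of differentiability of the numerical stages to Theorem \ref{well}, which strictly concerns the exact flow, is a side remark the paper also leaves implicit).
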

\begin{proof}
Denoting the $j$th-component of $(P^{h}_{k},Q^{h}_{k})$ by $(P^{h,j}_{k},Q^{h,j}_{k})$, $j=1,...,m$, then the exterior derivatives of $P^{h,j}_{k+1}$ and $Q^{h,j}_{k+1}$ show that
\begin{equation*}
\begin{split}
dP^{h,j}_{k+1}&=dP^{h,j}_{k}-\sum^{s}_{\alpha=1}\sum^{d}_{i=0}\sum^{m}_{r=1}b_{\alpha}\left(\frac{\partial^{2} H_{i}}{\partial Q^{j}\partial P^{r}}dP^{h,r}_{k}(\alpha)+\frac{\partial^{2} H_{i}}{\partial Q^{j}\partial Q^{r}}dQ^{h,r}_{k}(\alpha)\right)X^{i}_{t_{k},t_{k+1}},\\
dQ^{h,j}_{k+1}&=dQ^{h,j}_{k}+\sum^{s}_{\alpha=1}\sum^{d}_{i=0}\sum^{m}_{r=1}b_{\alpha}\left(\frac{\partial^{2} H_{i}}{\partial P^{j}\partial P^{r}}dP^{h,r}_{k}(\alpha)+\frac{\partial^{2} H_{i}}{\partial P^{j}\partial Q^{r}}dQ^{h,r}_{k}(\alpha)\right)X^{i}_{t_{k},t_{k+1}}.
\end{split}
\end{equation*}
Exterior product performed between above equations yield
\begin{equation*}
\begin{split}
&dP^{h,j}_{k+1}\wedge dQ^{h,j}_{k+1}=dP^{h,j}_{k}\wedge dQ^{h,j}_{k}\\
&+\sum^{d}_{i=0}\sum^{m}_{r=1}\sum^{s}_{\alpha=1}b_{\alpha}dP^{h,j}_{k}\wedge\left(\frac{\partial^{2} H_{i}}{\partial P^{j}\partial P^{r}} dP^{h,r}_{k}(\alpha)+\frac{\partial^{2} H_{i}}{\partial P^{j}\partial Q^{r}} dQ^{h,r}_{k}(\alpha)\right)X^{i}_{t_{k},t_{k+1}}\\
&-\sum^{d}_{i=0}\sum^{m}_{r=1}\sum^{s}_{\alpha=1}b_{\alpha}\left(\frac{\partial^{2} H_{i}}{\partial Q^{j}\partial P^{r}}dP^{h,r}_{k}(\alpha)+\frac{\partial^{2} H_{i}}{\partial Q^{j}\partial Q^{r}}dQ^{h,r}_{k}(\alpha)\right)\wedge dQ^{h,j}_{k}X^{i}_{t_{k},t_{k+1}}\\
&-\left(\sum^{d}_{i_{1}=0}\sum^{m}_{r_{1}=1}\sum^{s}_{\alpha=1}b_{\alpha}\left(\frac{\partial^{2} H_{i_{1}}}{\partial Q^{j}\partial P^{r_{1}}}dP^{h,r_{1}}_{k}(\alpha)+\frac{\partial^{2} H_{i_{1}}}{\partial Q^{j}\partial Q^{r_{1}}}dQ^{h,r_{1}}_{k}(\alpha)\right)X^{i_{1}}_{t_{k},t_{k+1}}\right)\\
&\wedge \left( \sum^{d}_{i_{2}=0}\sum^{m}_{r_{2}=1}\sum^{s}_{\alpha=1}b_{\alpha}\left(\frac{\partial^{2} H_{i_{2}}}{\partial P^{j}\partial P^{r_{2}}} dP^{h,r_{2}}_{k}(\alpha)+\frac{\partial^{2} H_{i_{2}}}{\partial P^{j}\partial Q^{r_{2}}}dQ^{h,r_{2}}_{k}(\alpha)\right)X^{i_{2}}_{t_{k},t_{k+1}}  \right).
\end{split}
\end{equation*}
Replacing $dP^{h,j}_{k}$ and $dQ^{h,j}_{k}$ in the above equation by the following expressions
\begin{equation*}
\begin{split}
dP^{h,j}_{k}&=dP^{h,j}_{k}(\alpha)+\sum^{s}_{\beta=1}\sum^{d}_{i=0}\sum^{m}_{r=1}a_{\alpha\beta}\left(\frac{\partial^{2} H_{i}}{\partial Q^{j}\partial P^{r}}dP^{h,r}_{k}(\beta)+\frac{\partial^{2} H_{i}}{\partial Q^{j}\partial Q^{r}}dQ^{h,r}_{k}(\beta)\right)X^{i}_{t_{k},t_{k+1}},\\
dQ^{h,j}_{k}&=dQ^{h,j}_{k}(\alpha)-\sum^{s}_{\beta=1}\sum^{d}_{i=0}\sum^{m}_{r=1}a_{\alpha\beta}\left(\frac{\partial^{2} H_{i}}{\partial P^{j}\partial P^{r}}dP^{h,r}_{k}(\beta)+\frac{\partial^{2} H_{i}}{\partial P^{j}\partial Q^{r}}dQ^{h,r}_{k}(\beta)\right)X^{i}_{t_{k},t_{k+1}},
\end{split}
\end{equation*}
we obtain the symplectic condition
$$a_{\alpha\beta}b_{\alpha}+a_{\beta\alpha}b_{\beta}=b_{\alpha}b_{\beta},\ \forall\ \alpha,\beta=1,...,s.$$
This ensures the discrete symplectic structure $dP^{h}_{k+1}\wedge dQ^{h}_{k+1}=dP^{h}_{k}\wedge dQ^{h}_{k}$.
\end{proof}

Indeed, symplectic Runge--Kutta methods are implicit in general. As we said before, the solvability of symplectic Runge--Kutta methods should be given. Based on the Brouwer's theorem, we consider the bounded case and the linear case for the vector field $V$ respectively.
\begin{prop}\label{sol}
If $V=(V_{i})_{0\leq i\leq d}$ is a collection of vector fields in $Lip^{\gamma}$ for some $\gamma>0$, then for arbitrary time step $h>0$, initial value and coefficients $\{a_{\alpha\beta}:\alpha,\beta=1,...,s\}$, the $s$-stage Runge--Kutta method (\ref{RK}) has at least one solution for any $\omega$.
\end{prop}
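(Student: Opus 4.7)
The plan is to apply Brouwer's fixed-point theorem stage by stage. Since the update formula for $Y^h_{k+1}$ in (\ref{RK}) is explicit once the internal stages $Y^h_k(\alpha)$ are known, it suffices to show that, for each fixed $k$ and each fixed $\omega$, the coupled stage system
\[
\eta(\alpha) = Y^h_k + \sum^s_{\beta=1} a_{\alpha\beta}\, V(\eta(\beta))\, X_{t_k, t_{k+1}}, \quad \alpha = 1, \ldots, s,
\]
admits at least one solution $(\eta(1), \ldots, \eta(s)) \in (\R^{2m})^s$; the full trajectory is then built inductively from $Y^h_0 = z$.

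I would introduce the map $\Phi : (\R^{2m})^s \to (\R^{2m})^s$ whose $\alpha$-component equals the right-hand side of the display above. Because each $V_i$ lies in $Lip^{\gamma}$, it is continuous and uniformly bounded by $|V|_{Lip^{\gamma}}$, so $\Phi$ is continuous and satisfies the input-independent estimate
\[
|\Phi(\eta)_{\alpha} - Y^h_k| \le \Bigl(\sum^s_{\beta=1}|a_{\alpha\beta}|\Bigr)\sum^d_{i=0}|V_i|_{Lip^{\gamma}}\,|X^i_{t_k,t_{k+1}}|.
\]
Taking $R$ to be the maximum over $\alpha$ of this bound, the closed ball $B_R := \{\eta : \max_{\alpha} |\eta(\alpha) - Y^h_k| \le R\}$ is convex and compact in $(\R^{2m})^s$ and is mapped into itself by $\Phi$.

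Brouwer's fixed-point theorem then furnishes a fixed point $\eta^* \in B_R$, whose components can be taken as admissible stage values $Y^h_k(\alpha) := \eta^*(\alpha)$. Plugging these into the explicit update formula for $Y^h_{k+1}$ in (\ref{RK}) and iterating in $k$ produces the full numerical trajectory. The argument is entirely deterministic in $\omega$, so it yields a solution for every realization of the noise, every time step $h > 0$, and every choice of coefficients $\{a_{\alpha\beta}\}$.

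There is no real obstacle here; the key feature of the $Lip^{\gamma}$ assumption exploited is the \emph{uniform} pointwise bound on $V$, which makes the self-mapping property of $\Phi$ hold unconditionally in $h$ and in $|X_{t_k,t_{k+1}}|$. This is precisely what breaks down for unbounded (e.g.\ linear) vector fields and motivates the separate treatment of the linear case later in the paper. One should note that uniqueness is neither asserted nor expected without an additional smallness condition of the form $h |V|_{Lip^{\gamma}} |X_{t_k,t_{k+1}}| \ll 1$, which would instead make $\Phi$ a contraction and yield a unique solution via the Banach fixed-point theorem.
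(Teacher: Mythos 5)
Your argument is correct, and it reaches the conclusion by a more direct route than the paper. You apply Brouwer's theorem to the stage map $\Phi$ itself: since every $V_i\in Lip^{\gamma}$ is continuous and uniformly bounded by $|V|_{Lip^{\gamma}}$, the image of $\Phi$ lies in a fixed product of balls centred at $Y^h_k$, so $\Phi$ maps that convex compact set into itself and a fixed point exists; this is sound, and your closing remark correctly identifies the uniform bound on $V$ as the crucial ingredient. The paper instead works with $\phi(Z)=Z-\Phi(Z)$ and shows only the weaker ``outward-pointing'' condition $Z^{\top}\phi(Z)>0$ on a sphere of radius $R=\sqrt{s}\,|Y^h_k|+s^{2}c\nu|X_{t_k,t_{k+1}}|+1$, then derives a zero of $\phi$ inside the ball by a contradiction argument with the auxiliary map $\psi(Z)=-R\phi(Z)/|\phi(Z)|$ (a standard corollary of Brouwer). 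The payoff of that slightly more roundabout formulation is reusability: in Proposition \ref{linearcase} the vector fields are linear and unbounded, so your invariant-ball argument fails there, but the sign condition $Z^{\top}\phi(Z)\geq |Z|(|Z|-|Y^h_k|)$ still holds by skew symmetry, and the paper simply cites the argument of Proposition \ref{sol}. So your proof is simpler for the bounded case, while the paper's version is engineered to cover both cases with one lemma-like mechanism. One minor quibble: in your aside on uniqueness, the contraction condition should involve $\max_{\alpha,\beta}|a_{\alpha\beta}|\,|V|_{Lip^{\gamma}}\,|X_{t_k,t_{k+1}}|$ being small rather than an extra factor of $h$, since the increment $X_{t_k,t_{k+1}}$ (whose zeroth component is $h$) already carries the step-size dependence; this does not affect the existence claim you were asked to prove.
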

\begin{proof}
Fix $h$ and $Y_{k}^{h}$. Denote $Z_{1},...,Z_{s}\in\R^{2m}$ and $Z=(Z_{1}^\top,...,Z_{s}^\top)^\top\in\R^{2ms}$. We define a map $\phi:\R^{2ms}\rightarrow \R^{2ms}$ with
\begin{align*}
\begin{split}
\phi(Z)&=(\phi(Z)_{1}^\top,...,\phi(Z)_{s}^\top)^\top,\\ \phi(Z)_{\alpha}&=Z_{\alpha}-Y_{k}^{h}-\sum^{s}_{\beta=1}a_{\alpha\beta}V(Z_{\beta})X_{t_{k},t_{k+1}}(\omega),\ \alpha=1,...,s.
\end{split}
\end{align*}
Let $c\geq\max\{|a_{\alpha\beta}|:\alpha,\beta=1,...,s\}$,  $\nu\geq|V|_{Lip^{\gamma}}$ and $R=\sqrt{s}|Y_{k}^{h}|+s^{2}c\nu|X_{t_{k},t_{k+1}}(\omega)|+1$, we have that for any $|Z|= R$,
\begin{align*}
Z^\top\phi(Z)&=\sum^{s}_{\alpha=1}Z_{\alpha}^\top\left(Z_{\alpha}-Y_{k}^{h}-\sum^{s}_{\beta=1}a_{\alpha\beta}V(Z_{\beta})X_{t_{k},t_{k+1}}(\omega)\right)\\
&\geq|Z|\left(|Z|-\sqrt{s}|Y_{k}^{h}|-s^{2}c\nu|X_{t_{k},t_{k+1}}(\omega)|\right)>0.
\end{align*}

We aim to show that $\phi(Z)=0$ has a solution in the ball $B_{R}:=\{Z:|Z|\leq R\}$.
Assume by contradiction that $\phi(Z)\neq 0$ for any $|Z|\leq R$. We define a continuous map $\psi$ by $\psi(Z)=-\frac{R\phi(Z)}{|\phi(Z)|}$. Since $\psi: B_{R}\rightarrow B_{R}$, $\psi$ has at least one fixed point $Z^{*}$ such that $|Z^{*}|=|\psi(Z^{*})|=R$. This leads to a contradiction that $|Z^{*}|^{2}=\psi(Z^{*})^\top Z^{*}=-\frac{RZ^\top\phi(Z)}{|\phi(Z)|}<0$. Therefore, $\phi$ has at least one solution and then (\ref{RK}) is valid.
\end{proof}

Things in the linear case are more complicated, then additional conditions should be considered. We focus on the $1$-stage symplectic Runge--Kutta method ($a_{11}=\frac{1}{2},b_{1}=1$), i.e., the midpoint scheme:
\begin{align}\label{midpoint}
\begin{split}
Y^{h}_{k+1/2}&=Y^{h}_{k}+\frac{1}{2}V(Y^{h}_{k+1/2})X_{t_{k},t_{k+1}},\\
Y^{h}_{k+1}&=Y^{h}_{k}+V(Y^{h}_{k+1/2})X_{t_{k},t_{k+1}}.
\end{split}
\end{align}

\begin{prop}\label{linearcase}
Suppose $V=(V_{i})_{0\leq i\leq d}$ is a collection of linear vector fields of the form $V_{i}(Y)=A_{i}Y$. If $A_{i}$, $i=0,...,d$ are all skew symmetric, i.e., $A_{i}=-A_{i}^\top$, then for arbitrary time step $h>0$ and initial value, the the midpoint scheme is solvable  for any $\omega$.
\end{prop}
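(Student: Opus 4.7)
The plan is to reduce the midpoint scheme in the linear skew-symmetric case to a single linear system, and then exploit skew-symmetry to show that this system is uniquely solvable. Since $V_i(Y)=A_iY$, the first line of \eqref{midpoint} becomes
\begin{align*}
\left(I_{2m}-\tfrac{1}{2}B(\omega)\right)Y^{h}_{k+1/2}=Y^{h}_{k},
\end{align*}
where $B(\omega):=\sum_{i=0}^{d}A_{i}X^{i}_{t_{k},t_{k+1}}(\omega)\in\R^{2m\times 2m}$. Because each $A_{i}$ is skew symmetric and $X^{i}_{t_{k},t_{k+1}}(\omega)\in\R$, the matrix $B(\omega)$ is also skew symmetric. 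So the whole question reduces to whether $I_{2m}-\tfrac{1}{2}B(\omega)$ is invertible for every realization of the noise.

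The main (and essentially only) step is to verify that $\ker\bigl(I_{2m}-\tfrac{1}{2}B(\omega)\bigr)=\{0\}$. This is where skew-symmetry is decisive: if $v\in\R^{2m}$ satisfies $v=\tfrac{1}{2}B(\omega)v$, I would take the Euclidean inner product of both sides with $v$ to get $|v|^{2}=\tfrac{1}{2}v^{\top}B(\omega)v$, and then observe that $v^{\top}B(\omega)v=0$ since $B(\omega)^{\top}=-B(\omega)$. Hence $|v|^{2}=0$ and $v=0$. Equivalently, one can note that the eigenvalues of $B(\omega)$ are purely imaginary, so $1$ is never an eigenvalue of $\tfrac{1}{2}B(\omega)$.

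Having established invertibility, I would conclude that for every $\omega$ and every $h>0$,
\begin{align*}
Y^{h}_{k+1/2}=\left(I_{2m}-\tfrac{1}{2}B(\omega)\right)^{-1}Y^{h}_{k},
\end{align*}
is the unique intermediate stage, and then the second line of \eqref{midpoint} defines $Y^{h}_{k+1}$ explicitly. Proceeding inductively in $k$, the midpoint scheme is well-defined and uniquely determined on the whole grid $\{t_{0},\dots,t_{N}\}$, proving the proposition.

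I do not expect any serious obstacle here: unlike the $Lip^{\gamma}$ setting of Proposition \ref{sol}, the Brouwer argument is not needed because linearity turns the nonlinear fixed-point equation into a genuine linear system, and skew-symmetry does the rest. The only care required is to confirm that the bound on $|X_{t_{k},t_{k+1}}(\omega)|$ plays no role, i.e., invertibility of $I_{2m}-\tfrac{1}{2}B(\omega)$ holds regardless of the size of the noise increment; this is exactly the content of the skew-symmetry argument above.
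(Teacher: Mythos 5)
Your proof is correct, but it takes a genuinely different route from the paper. The paper does not solve the linear system directly: it reuses the Brouwer-type argument of Proposition \ref{sol}, defining $\phi(Z)=Z-Y^{h}_{k}-\tfrac{1}{2}V(Z)X_{t_{k},t_{k+1}}(\omega)$ and using the skew-symmetry identity $Z^\top A_{i}Z=0$ to get $Z^\top\phi(Z)\geq |Z|\bigl(|Z|-|Y^{h}_{k}|\bigr)$, which yields existence of a zero of $\phi$ in the ball $\{Z:|Z|\leq|Y^{h}_{k}|+1\}$ by the degree/fixed-point argument of the previous proposition. You instead observe that linearity turns the implicit stage equation into the linear system $\bigl(I_{2m}-\tfrac{1}{2}B(\omega)\bigr)Y^{h}_{k+1/2}=Y^{h}_{k}$ with $B(\omega)=\sum_{i=0}^{d}A_{i}X^{i}_{t_{k},t_{k+1}}(\omega)$ skew symmetric, and prove invertibility via $v=\tfrac{1}{2}B(\omega)v\Rightarrow|v|^{2}=\tfrac{1}{2}v^\top B(\omega)v=0$. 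Both arguments hinge on the same structural fact, the vanishing of the skew-symmetric quadratic form, but your route is more elementary (pure linear algebra, no topological fixed-point theorem) and buys strictly more: unique solvability and an explicit closed-form update $Y^{h}_{k+1/2}=\bigl(I_{2m}-\tfrac{1}{2}B(\omega)\bigr)^{-1}Y^{h}_{k}$, independent of the size of the noise increment, whereas the paper's method only asserts existence of a solution. The paper's choice has the organizational advantage of recycling Proposition \ref{sol} verbatim, but your argument is self-contained and sharper for this linear case.
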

\begin{proof}
Fix $h$ and $Y_{k}^{h}$. We define a map $\phi:\R^{2m}\rightarrow \R^{2m}$ with
$$\phi(Z)=Z-Y_{k}^{h}-\frac{1}{2}V(Z)X_{t_{k},t_{k+1}}(\omega).$$
Since $A_{i}$, $i=0,...,d$, are all skew symmetric, we have $Z^\top A_{i}Z=0$, which leads to
\begin{align*}
Z^\top\phi(Z)&=|Z|^{2}-Z^\top Y_{k}^{h}-\frac{1}{2}\sum^{d}_{i=0}Z^\top A_{i}ZX_{t_{k},t_{k+1}}^{i}(\omega)\geq|Z|(|Z|-|Y_{k}^{h}|).
\end{align*}
From Proposition \ref{sol}, we know that $\phi$ has at least a solution in the ball $\{Z:|Z|\leq |Y_{k}^{h}|+1\}$.
This implies that the midpoint scheme is solvable.
\end{proof}

\begin{rk}
For a linear rough Hamiltonian system, i.e., (\ref{sdeH}) can be rewritten equivalently into the form (\ref{symeq}), if in addition $A_{i}\in\R^{2m\times 2m}$ is skew symmetric, then $A_{i}$ is in the form of $\left( \begin{array}{cc}
A_{i}^{1} & -A_{i}^{2} \\
A_{i}^{2} & A_{i}^{1}\end{array} \right)$, where $A_{i}^{1}=-(A_{i}^{1})^\top \in \R^{m\times m}$ and $A_{i}^{2}=(A_{i}^{2})^\top \in \R^{m\times m}$.
\end{rk}

\section{Convergence analysis}\label{sec5}

In Definition \ref{solution}, the solution of an RDE depends on the information about iterated integrals of the noise, which is very difficult if possible to be simulated. The Runge--Kutta methods are implementable because they only make use of the increments of the noise and omit the antisymmetric part of iterated integrals, which is called L$\acute{e}$vy's area in the Brownian motion setting. To analyze the error caused by this,
we introduce piecewise linear approximations, which is a special case of Wong--Zakai approximations.
We define $x^{h}_{t}:=(x^{h,0}_{t},...,x^{h,d}_{t})$ with
\begin{align}\label{xh}
x^{h,i}_{t}:= X^{i}_{t_{k}}+\dfrac{t-t_{k}}{h}X^{i}_{t_{k},t_{k+1}},\ t \in (t_{k},t_{k+1}],\ \forall\ k=1,...,N,\ i=0,...,d,
\end{align}
and consider the following differential equation in Riemann-Stieltjes integral sense:
\begin{align}\label{ode}
dy^{h}_{t}=V(y^{h}_{t})dx^{h}_{t},\quad y^{h}_{0}=z.
\end{align}

\begin{tm}\label{WZk}
Suppose $X$ and $x^{h}$ are as in Assumption \ref{R} and (\ref{xh}) respectively. If $V=(V_{i})_{0\leq i\leq d}$ is a collection of vector fields in $Lip^{\gamma}$ for some $\gamma>2\rho$, or a collection of linear vector fields of the form $V_{i}(Y)=A_{i}Y$, then both (\ref{sdeH}) and (\ref{ode}) have unique solutions $Y$ and $Y^{h}$ almost surely.

Further, let $\theta\geq|z|$, $\nu\geq\max\{|V|_{Lip^{\gamma}},|A_{i}|,i=0,,,,d\}$, then for any $0\leq\eta < \min\{\frac{1}{\rho} -\frac{1}{2}, \frac{1}{2\rho} - \frac{1}{\gamma}\}$, there exits a finite random variable $C(\omega)$ and a null set $M$ such that
\begin{align*}
\sup_{t \in [0,T]}|y_{t}^{h}(\omega)-Y_{t}(\omega)| \leq C(\omega)h^{\eta},\quad \forall\ \omega\in \Omega \setminus M
\end{align*}
holds for any $h\geq0$. $C(\omega)$ depends on $\rho$, $\eta$, $\gamma$, $\nu$, $\theta$, $K$ and $T$.
\end{tm}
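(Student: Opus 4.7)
The plan is to combine the Gaussian rough-path lift of $X$, a quantitative Wong--Zakai rate in the rough-path topology, and the local Lipschitz continuity of the It\^o--Lyons solution map. By Assumption \ref{R} and the standard Gaussian rough-path criterion (e.g., \cite[Theorem 15.33]{Friz}), $X$ admits a natural lift to a H\"older-type weak geometric $p$-rough path $\textbf{X}$ for every $p > 2\rho$; Theorem \ref{well} then yields a unique solution $Y$ to \eqref{sdeH} almost surely. Since $x^h$ is piecewise linear and hence of bounded variation, its canonical lift $S_{[p]}(x^h)$ has bounded variation, so \eqref{ode} is a classical Riemann--Stieltjes ODE to which Theorem \ref{well} supplies a unique solution $y^h$.

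For the rate, I would proceed in two steps. Step (a): establish that, almost surely, for every $\varepsilon > 0$,
\begin{align*}
\sup_{h>0}\|S_{[p]}(x^h)\|_{1/p\text{-}{\rm H\ddot{o}l};[0,T]} < \infty, \qquad d_{1/p\text{-}{\rm H\ddot{o}l};[0,T]}\bigl(S_{[p]}(x^h),\textbf{X}\bigr) \leq C(\omega)\, h^{\,1/\rho - 1/2 - \varepsilon},
\end{align*}
where $d_{1/p\text{-}{\rm H\ddot{o}l}}$ denotes an appropriate H\"older-type rough-path distance. This is the Wong--Zakai estimate for Gaussian rough paths proved in \cite{Wzk} (see also \cite[Chapter 15]{Friz}); it relies only on Assumption \ref{R} and the piecewise-linear structure of $x^h$. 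Step (b): apply the local Lipschitz property of the It\^o--Lyons map (Theorem 10.26 in \cite{Friz}). On bounded sets of $p$-rough paths, the solution map is H\"older continuous with exponent close to $\gamma/p - 1$, so
\begin{align*}
\sup_{t\in[0,T]}|y^h_t - Y_t| \leq C(\omega)\,\bigl(d_{1/p\text{-}{\rm H\ddot{o}l};[0,T]}(S_{[p]}(x^h),\textbf{X})\bigr)^{\kappa}
\end{align*}
for any $\kappa < \gamma/p - 1$. Combining with Step (a) and optimizing $p \in (2\rho,\gamma)$ produces the constraint $\eta < 1/(2\rho) - 1/\gamma$, while the uninterpolated version directly gives $\eta < 1/\rho - 1/2$, whence the minimum appearing in the statement.

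The linear case is handled by the localization device already used in Theorem \ref{well}: the a priori bound \eqref{Y} for $Y$, together with a uniform-in-$h$ analogue for $y^h$ (which follows from the uniform $p$-variation bound of $S_{[p]}(x^h)$ in Step (a)), allows us to replace $V$ by a compactly supported $\tilde V \in Lip^{\gamma}$ that coincides with $V$ on a ball containing all trajectories, thereby reducing the problem to the $Lip^{\gamma}$ case.

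The main obstacle I expect is the interpolation in Step (b): balancing the H\"older exponent $\gamma/p - 1$ against the Wong--Zakai rate $1/\rho - 1/2$ of Step (a) to recover exactly the minimum $\min\{1/\rho - 1/2,\, 1/(2\rho) - 1/\gamma\} - \varepsilon$. This requires a careful choice of $p \in (2\rho, \gamma)$ and a version of the continuity of the solution map that preserves the H\"older-in-$h$ exponent explicitly, rather than merely qualitative convergence. Ensuring that the random constant $C(\omega)$ is almost-surely finite then uses Fernique-type integrability for $\|\textbf{X}\|_{p\text{-}var;[0,T]}$ available in the Gaussian rough-path theory.
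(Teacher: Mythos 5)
Your proposal follows essentially the same route as the paper: for the $Lip^{\gamma}$ case the paper simply invokes Theorem 6 and Corollary 8 of \cite{Wzk} (i.e., the Wong--Zakai rate in rough-path metric plus continuity of the It\^o--Lyons map that you sketch in Steps (a)--(b)), and for the linear case it uses exactly your localization argument --- a uniform-in-$h$ bound on $\|S_{[p]}(x^{h})\|_{p\text{-}var;[0,T]}$ from \cite[Theorem 15.33]{Friz} together with the a priori estimate (\ref{Y}) to replace $V$ by a compactly supported $\tilde V\in Lip^{\gamma}$ coinciding with $V$ on a ball containing all trajectories. Your exponent bookkeeping in Step (b) (a H\"{o}lder exponent $\gamma/p-1$ for the solution map, then optimizing $p$) is looser than the mechanism actually used in \cite{Wzk} (local Lipschitzness for $p<\gamma$ combined with interpolation of the rough-path distance), but since the paper delegates precisely this part to that citation, this is not a divergence from its proof.
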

\begin{proof}
For the first case of $V$, see Theorem 6 and Corollary 8 in \cite{Wzk}. $C(\omega)$ depends on $\rho$, $\eta$, $\gamma$, $\nu$, $K$ and $T$.

For the linear case, we know if $p>2\rho$, then for any $\omega\in\Omega\setminus M$, $\|S_{[p]}(x^{h})(\omega)\|_{p\text{-}var;[0,T]}$ has an upper bound for all $h\geq0$ (see \cite[Theorem 15.33]{Friz}). According to the estimate in (\ref{Y}), there exists some constant $R>0$ depending on $\omega$, $\rho$, $\theta$, $\eta$, $\nu$ and $T$, such that it is a bound for $Y_{t}$ and $y^{h}_{t}$, for all $h\geq0$ on $[0,T]$. Using again localization, we replace $V$ by a compactly supported $\tilde{V}\in Lip^{\gamma}$ which coincides with $V$ in the ball $B_{R}:=\{y:|y|\leq R\}$ and then the result can be generalized from the first case to this linear case.
\end{proof}

We are in position to analyze the convergence rate. If $V\in Lip^{\gamma}$, we analyze the midpoint scheme for $\rho\in[1,3/2)$ in Theorem \ref{midrate} and two $s$-stage Runge--Kutta methods ($s>1$) for $\rho\in[1,2)$ in Corollary \ref{RKrate}. If $V$ is linear, we focus on the convergence rate of the midpoint scheme for $\rho\in[1,3/2)$ in Theorem \ref{midlinear}, using the uniform boundedness of numerical solutions in Proposition \ref{solb}.

\begin{tm}\label{midrate}
Suppose $\rho \in [1,3/2)$ and $Y_{k}^{h}$ is the numerical solution to the midpoint scheme (\ref{midpoint}). If $|V|_{Lip^{\gamma}}\leq\nu<\infty$ with $\gamma>2\rho$, then for any $0<\eta < \min\{\frac{3}{2\rho}-1, \frac{1}{2\rho} - \frac{1}{\gamma}\}$, there exists a finite random variable $C(\omega)$ and a null set $M$ such that
\begin{align*}
\max_{k=1,...,N}|Y^{h}_{k}(\omega)-Y_{t_{k}}(\omega)|\leq C(\omega)h^{\eta},\quad  \forall\ \omega \in \Omega\setminus M
\end{align*}
holds for any $0<h\leq 1$. $C(\omega)$ depends on $\rho$, $\eta$, $\gamma$, $\nu$, $K$ and $T$.
\end{tm}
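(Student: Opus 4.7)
The plan is to decompose the error through the piecewise-linear Wong--Zakai solution $y^h$ defined in (\ref{ode}):
\[
|Y^h_k - Y_{t_k}| \leq |Y^h_k - y^h_{t_k}| + |y^h_{t_k} - Y_{t_k}|.
\]
Theorem \ref{WZk} already bounds the second summand at rate $\min\{\tfrac{1}{\rho}-\tfrac{1}{2},\,\tfrac{1}{2\rho}-\tfrac{1}{\gamma}\}-\varepsilon$. Since $\rho\geq 1$ forces $\tfrac{3}{2\rho}-1\leq\tfrac{1}{\rho}-\tfrac{1}{2}$, this part never dominates the advertised rate $\min\{\tfrac{3}{2\rho}-1,\,\tfrac{1}{2\rho}-\tfrac{1}{\gamma}\}-\varepsilon$. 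The work therefore concentrates on showing $|Y^h_k - y^h_{t_k}| = O(h^{3/(2\rho)-1-\varepsilon})$, and this is where the second-order nature of the midpoint rule is exploited.

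For the one-step local-truncation error on $[t_k,t_{k+1}]$, the function $x^h$ is affine with slope $h^{-1}X_{t_k,t_{k+1}}$, so rescaling $u=(s-t_k)/h$ yields
\[
y^h_{t_{k+1}}(Y^h_k) = Y^h_k + X_{t_k,t_{k+1}}\int_0^1 V\bigl(y^h_{t_k+uh}(Y^h_k)\bigr)\,du,
\]
where $y^h_\cdot(Y^h_k)$ denotes the solution of (\ref{ode}) restarted at $Y^h_k$ at time $t_k$. Taylor-expanding $V$ around the exact flow value $y^h_{t_k+h/2}(Y^h_k)$ and using the ODE identity $y^h_{t_k+uh}(Y^h_k) - y^h_{t_k+h/2}(Y^h_k) = (u-\tfrac12)V(y^h_{t_k+h/2})X_{t_k,t_{k+1}} + O(|X_{t_k,t_{k+1}}|^2)$, the leading linear term in $u-\tfrac12$ is antisymmetric and integrates to zero on $[0,1]$. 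Combined with an implicit stage-value estimate $|Y^h_{k+1/2}-y^h_{t_k+h/2}(Y^h_k)|=O(|X_{t_k,t_{k+1}}|^2)$, obtained by viewing the map $Z\mapsto Y^h_k+\tfrac12 V(Z)X_{t_k,t_{k+1}}$ as a small perturbation of the exact midpoint flow, the one-step error comes out as $O(|X_{t_k,t_{k+1}}|^3)$ when $\gamma\geq 3$; in the range $2\rho<\gamma<3$ the Hölder remainder of the Taylor expansion gives a slightly weaker bound that is absorbed into the Wong--Zakai piece of the $\min$.

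For propagation I would invoke a discrete Gronwall-type stability: the flow of (\ref{ode}) over $[t_k,t_{k+1}]$ is $(1+C|X_{t_k,t_{k+1}}|)$-Lipschitz in its initial condition because $DV$ is bounded, so the error at step $k+1$ is at most $(1+C|X_{t_k,t_{k+1}}|)|Y^h_k-y^h_{t_k}|+C|X_{t_k,t_{k+1}}|^3$. Telescoping produces a product factor bounded by $\exp\bigl(C\sum_k|X_{t_k,t_{k+1}}|\bigr)$, which is a.s.\ finite uniformly in $h$ because $\textbf{X}$ has finite $p$-variation for every $p>2\rho$. Summing local errors via
\[
\sum_k|X_{t_k,t_{k+1}}|^3 \leq \Bigl(\max_k|X_{t_k,t_{k+1}}|\Bigr)^{3-p}\|\textbf{X}\|_{p\text{-}var;[0,T]}^p, \qquad p\in(2\rho,3),
\]
together with the bound $\max_k|X_{t_k,t_{k+1}}(\omega)|\leq C(\omega)h^{1/p}$ coming from the Hölder-type regularity of the lift in \cite[Theorem 15.33]{Friz}, produces $\sum_k|X_{t_k,t_{k+1}}|^3\leq C(\omega)h^{3/p-1}$; letting $p\downarrow 2\rho$ yields the rate $\tfrac{3}{2\rho}-1-\varepsilon$.

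The main obstacle is the local-error step. Because (\ref{midpoint}) is implicit, $Y^h_{k+1/2}$ has no closed form, and the cancellation of the linear Taylor coefficient rests crucially on the stage value agreeing with the exact midpoint flow $y^h_{t_k+h/2}(Y^h_k)$ to order $|X_{t_k,t_{k+1}}|^2$. Establishing this second-order stage agreement rigorously, bootstrapping from a first-order bound $|Y^h_{k+1/2}-y^h_{t_k+h/2}(Y^h_k)|=O(|X_{t_k,t_{k+1}}|)$ to the required $O(|X_{t_k,t_{k+1}}|^2)$ via a fixed-point/perturbation argument on the stage equation, is the technical centerpiece of the proof; once it is in hand, the symmetry-based cancellation in the Taylor expansion and the summation over $k$ are essentially automatic.
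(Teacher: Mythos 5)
Your decomposition through the Wong--Zakai solution $y^h$, the observation that $\frac{3}{2\rho}-1\leq\frac{1}{\rho}-\frac{1}{2}$ for $\rho\in[1,3/2)$, and the interpolation bound $\sum_k|X_{t_k,t_{k+1}}|^3\leq(\max_k|X_{t_k,t_{k+1}}|)^{3-p}\|\textbf{X}\|^p_{p\text{-}var;[0,T]}$ all match the intended argument. But the propagation step contains a genuine error: the per-step ODE Lipschitz factor $(1+C|X_{t_k,t_{k+1}}|)$ telescopes to $\exp\bigl(C\sum_k|X_{t_k,t_{k+1}}|\bigr)$, and this is \emph{not} bounded uniformly in $h$. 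Finite $p$-variation with $p>2\rho\geq 2$ does not control the $1$-variation along partitions; by Hölder's inequality $\sum_k|X_{t_k,t_{k+1}}|\leq N^{1-1/p}\|\textbf{X}\|_{p\text{-}var;[0,T]}\sim h^{1/p-1}\to\infty$ as $h\to 0$ (for fBm the sum behaves like $Th^{H-1}$), so your Gronwall factor explodes and destroys the rate. The fix, which is what the paper does, is to measure stability of the flow $\pi(t_s,\cdot,x^h)_{t_k}$ in the rough-path sense: by the analogue of Theorem 10.30 in \cite{Friz} its Lipschitz constant is $\exp\{C\nu^p\|S_{[p]}(x^h)\|^p_{p\text{-}var;[t_s,t_k]}\}$, and by \cite[Theorem 15.33]{Friz} the lifted piecewise-linear paths $S_{[p]}(x^h)$ have $p$-variation bounded uniformly in $h$ almost surely; then summing the $N$ local errors $C(\omega)h^{3/p}$ gives $C(\omega)h^{3/p-1}$ directly.

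On the local error, your route (expanding around the exact midpoint flow value and relying on the antisymmetry of the $u-\tfrac12$ term) leaves its key ingredient, the stage agreement $|Y^h_{k+1/2}-y^h_{t_k+h/2}(Y^h_k)|=O(|X_{t_k,t_{k+1}}|^2)$, unproven, and you yourself flag it as the missing centerpiece. The paper bypasses it: since $x^h$ is affine on each step, its iterated integrals are $X_{t_k,t_{k+1}}^{\otimes n}/n!$, so a direct Taylor expansion of $y^h$ and of the implicit midpoint update (expanding $V(Y^h_{k+1/2})$ about $Y^h_k$ and using $|Y^h_{k+1}-Y^h_k|\leq|V(Y^h_{k+1/2})||X_{t_k,t_{k+1}}|$) shows both agree through the terms $V\,X_{t_k,t_{k+1}}+DV\,V\,X_{t_k,t_{k+1}}^{\otimes 2}/2$ with remainders of size $|X_{t_k,t_{k+1}}|^3+|X_{t_k,t_{k+1}}|^4$; this only needs $\gamma>2$, which is automatic from $\gamma>2\rho\geq 2$, so your case split at $\gamma=3$ is unnecessary. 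As written, the proposal does not yet constitute a proof because of the unproven stage estimate and, more seriously, the invalid uniform bound on the Gronwall factor.
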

\begin{proof}
We divide the error into two parts
\begin{align}\label{twoparts}
\max_{k=1,...,N}|Y^{h}_{k}-Y_{t_{k}}|\leq \max_{k=1,...,N}|Y^{h}_{k}-y^{h}_{t_{k}}|+\sup_{t\in[0,T]}|y^{h}_{t}-Y_{t}|.
\end{align}
The second part has been estimated in Theorem \ref{WZk}, then it suffices to estimate the first part.

$\it{Step\ 1.\ Local\ error.}$ According to the definition of $x^{h}_{t}$, for $k=1,...,N$, $n\geq1$, $i_{1},...,i_{n}\in\{0,...,d\}$, we have
\begin{align*}
\int_{t_{k-1}\leq u_{1}<\cdot\cdot\cdot<u_{n}\leq t_{k}}dx^{h,i_{1}}_{u_{1}}\cdot\cdot\cdot dx^{h,i_{n}}_{u_{n}}=&\int_{t_{k-1}\leq u_{1}<\cdot\cdot\cdot<u_{n}\leq t_{k}}\frac{X^{i_{1}}_{t_{k-1},t_{k}}}{h}du_{1}\cdot\cdot\cdot\frac{X^{i_{n}}_{t_{k-1},t_{k}}}{h}du_{n}\\
=&\frac{X^{i_{1}}_{t_{k-1},t_{k}}\cdot\cdot\cdot X^{i_{n}}_{t_{k-1},t_{k}}}{n!},
\end{align*}
which concludes $\int_{t_{k-1}\leq u_{1}<\cdot\cdot\cdot<u_{n}\leq t_{k}}dx^{h}_{u_{1}}\otimes\cdot\cdot\cdot \otimes dx^{h}_{u_{n}}=X_{t_{k-1},t_{k}}^{\otimes n}/n!$. Taylor expansion yields
\begin{align*}
y^{h}_{t_{1}}&=y^{h}_{0}+\int_{0\leq u_{1}\leq t_{1}}V(y^{h}_{u_{1}})dx^{h}_{u_{1}}\\
&=y^{h}_{0}+V(y^{h}_{0})X_{0,t_{1}}+\int_{0\leq u_{2}<u_{1}\leq t_{1}}DV(y^{h}_{u_{2}})V(y^{h}_{u_{2}})dx^{h}_{u_{2}}\otimes dx^{h}_{u_{1}}\\
&=y^{h}_{0}+V(y^{h}_{0})X_{0,t_{1}}+DV(y^{h}_{0})V(y^{h}_{0})X_{0,t_{1}}^{\otimes 2}/2+R_{1}
\end{align*}
with
\begin{align*}
R_{1}=\int_{0\leq u_{3}<u_{2}<u_{1}\leq t_{1}}\left(D^{2}V(y^{h}_{u_{3}})V(y^{h}_{u_{3}})^{2}+DV(y^{h}_{u_{3}})^{2}V(y^{h}_{u_{3}})\right)dx^{h}_{u_{3}}\otimes dx^{h}_{u_{2}}\otimes dx^{h}_{u_{1}}.
\end{align*}
As for $V(Y^{h}_{1/2})$ in (\ref{midpoint}), we have
\begin{align*}
V\left(Y^{h}_{1/2}\right)=&V\left(Y^{h}_{0}\right)+DV(Y^{h}_{0})(Y^{h}_{1}-Y^{h}_{0})/2+D^{2}V(\tilde{Y}^{h}_{0})(Y^{h}_{1}-Y^{h}_{0})^{\otimes2}/8,
\end{align*}
where $\tilde{Y}^{h}_{0}$ is between $Y^{h}_{0}$ and $Y^{h}_{1/2}$. Then,
\begin{align*}
Y^{h}_{1}=Y^{h}_{0}+V\left(Y^{h}_{0}\right)X_{0,t_{1}}+DV(Y^{h}_{0})V(Y^{h}_{0})X_{0,t_{1}}^{\otimes2}/2+R_{2}
\end{align*}
with
\begin{align*}
R_{2}=&\left(DV(Y^{h}_{0})\right)^{2}(Y^{h}_{1}-Y^{h}_{0})X_{0,t_{1}}^{\otimes2}/4+D^{2}V(\tilde{Y}^{h}_{0})(Y^{h}_{1}-Y^{h}_{0})^{\otimes 2}X_{0,t_{1}}/8\\
&+DV(Y^{h}_{0})D^{2}V(\tilde{Y}^{h}_{0})(Y^{h}_{1}-Y^{h}_{0})^{\otimes 2}X_{0,t_{1}}^{\otimes2}/16.
\end{align*}
Noticing $|Y^{h}_{1}-Y^{h}_{0}|\leq|V(Y^{h}_{1/2})\|X_{0,t_{1}}|$ and $\gamma>2$, we obtain the difference between $Y^{h}_{1}$ and $y^{h}_{t_{1}}$ is
\begin{align*}
|Y^{h}_{1}-y^{h}_{t_{1}}|\leq |R_{2}-R_{1}| \leq \max\{|V|_{Lip^{\gamma}}^{4},1\}(|X_{0,t_{1}}|^{3}+|X_{0,t_{1}}|^{4}).
\end{align*}
For any $0<\eta < \frac{3}{2\rho}-1$, there exists a $p>2\rho$, such that $\eta=\frac{3}{p}-1$. Since for any $\omega\in\Omega\setminus M$, $|X_{0,t_{1}}(\omega)| =|\pi_{1}(\textbf{X}_{0,t_{1}}(\omega))|\leq \|\textbf{X}(\omega)\|_{1/p\text{-}{\rm H\ddot{o}l};[0,T]}h^{1/p}$, the local error is
\begin{align*}
|Y^{h}_{1}(\omega)-y^{h}_{t_{1}}(\omega)|\leq C_{1}(\omega)h^{3/p},\quad \forall\ h\leq 1.
\end{align*}

$\it{Step\ 2.\ Global\ error.}$ We use the notation $\pi(t_{0},y_{0},x)_{t}$, $t \geq t_{0}$, to represent the solution of (\ref{ode}) with the driven noise $x$ and the initial value $y_{0}$ at time $t_{0}$. For any $\omega\in\Omega\setminus M$, we know that $\|S_{[p]}(x^{h})(\omega)\|^{p}_{p\text{-}var;[0,T]}$ has an upper bound for all $h>0$ (see \cite{Friz}, Theorem 15.33). Since $\gamma>2\rho$, similar to Theorem 10.30 in \cite{Friz}, there exists some constant $C=C(\gamma,p)$ such that for any  $k=1,...,N$,
\begin{align*}
|Y^{h}_{k}(\omega)-y_{t_{k}}^{h}(\omega)|\leq& \sum^{k}_{s=1}|\pi(t_{s},Y^{h}_{s},x^{h})_{t_{k}}-\pi(t_{s-1},Y^{h}_{s-1},x^{h})_{t_{k}}|\\
\leq& \sum^{k}_{s=1}\exp\{C\nu^{p}\|S_{[p]}(x^{h})\|^{p}_{p\text{-}var;[t_{s},t_{k}]}\}|Y^{h}_{s}-\pi(t_{s-1},Y^{h}_{s-1},x^{h})_{t_{s}}|\\
\leq& \sum^{k}_{s=1}\exp\{C\nu^{p}\|S_{[p]}(x^{h})(\omega)\|^{p}_{p\text{-}var;[0,T]}\}C_{1}(\omega)h^{3/p}\\
\leq& C(\omega)h^{3/p-1}.
\end{align*}
Therefore, for any $\omega\in \Omega \setminus M$,
\begin{align}\label{scheor}
\max_{k=1,...,N}|Y^{h}_{k}(\omega)-y_{t_{k}}^{h}(\omega)|\leq C(\omega)h^{\eta},\quad \forall\ \eta<3/2\rho-1.
\end{align}
The fact that $\frac{3}{2\rho} -1 \leq \frac{1}{\rho} -\frac{1}{2}$ for $\rho \in [1,3/2)$ and Theorem \ref{WZk} complete the proof.
\end{proof}

When the components of the noise are independent standard Brownian motions, i.e., $\rho=1$, the convergence order is (almost) consistent with the mean square convergence rate of SDEs in Stratonovich sense with multiplicative noise. Moreover,
note that the estimate in Theorem \ref{midrate} is valid for the case when $\rho \in [1,3/2)$, which is similar to the simplified step-2 Euler scheme in \cite{Wzk}.
To construct a numerical scheme for $\rho\in[1,2)$ and fill the gap between the convergence rates of two parts in (\ref{twoparts}), we use a higher stage symplectic Runge--Kutta method with local order $\tau\geq4$ when applied to classical ordinary differential equations. If $\gamma> \max\{2\rho,\tau-1\}$, then the estimate in (\ref{scheor}) for the global error will hold for any $0<\eta<\frac{\tau}{2\rho}-1$. Two symplectic Runge--Kutta methods with $\tau=5$ and $\tau=4$ are given in Corollary \ref{RKrate} to illustrate this point.

\begin{cor}\label{RKrate}
Suppose $\rho \in [1,2)$ and $Y_{k}^{h}$ represents the solution to the Runge--Kutta methods with coefficients expressed in the Butcher tableaus below.

Method \uppercase\expandafter{\romannumeral1}:
\begin{displaymath}
\begin{array}{c|ccc}
(3-\sqrt{3})/6&1/4&(3-2\sqrt{3})/12\\
(3+\sqrt{3})/6&(3+2\sqrt{3})/12&1/4\\
\hline
\ &1/2&1/2
\end{array}.
\end{displaymath}

Method \uppercase\expandafter{\romannumeral2}:
\begin{displaymath}
\begin{array}{c|ccc}
a/2&a/2&\ &\ \\
3a/2&a&a/2&\ \\
1/2+a&a&a&1/2-a\\
\hline
\ &a&a&1-2a
\end{array},
\end{displaymath}
where $a=1.351207$ is the real root of $6x^{3}-12x^{2}+6x-1=0$.

If $|V|_{Lip^{\gamma}}\leq\nu<\infty$ with $\gamma >4$ for Method \uppercase\expandafter{\romannumeral1} ($\gamma >\max\{2\rho,3\}$ for Method \uppercase\expandafter{\romannumeral2}),
then for any $0<\eta<\min\{\frac{1}{\rho}-\frac{1}{2},\frac{1}{2\rho}-\frac{1}{\gamma}\}$, there exists a finite random variable $C(\omega)$ and a null set $M$ such that
\begin{align*}
\max_{k=1,...,N}|Y^{h}_{k}(\omega)-Y_{t_{k}}(\omega)|\leq C(\omega)h^{\eta},\quad \forall\ \omega \in \Omega\setminus M.
\end{align*}
In addition, both of the two methods inherit the symplectic structure of a rough Hamiltonian system.
\end{cor}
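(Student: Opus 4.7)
The plan is to follow the two-part error splitting already used in Theorem \ref{midrate}, namely
\begin{align*}
\max_{k=1,\dots,N}|Y^{h}_{k}-Y_{t_{k}}|\le \max_{k=1,\dots,N}|Y^{h}_{k}-y^{h}_{t_{k}}|+\sup_{t\in[0,T]}|y^{h}_{t}-Y_{t}|,
\end{align*}
and observe that the second term is already controlled by Theorem \ref{WZk} at the rate $\min\{\frac{1}{\rho}-\frac{1}{2},\frac{1}{2\rho}-\frac{1}{\gamma}\}$. The task is therefore to show that the scheme--versus--Wong--Zakai error $\max_{k}|Y^{h}_{k}-y^{h}_{t_{k}}|$ converges at a rate at least as good.

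For the local error on a single step $[t_{k},t_{k+1}]$, I would rescale time by $s=(t-t_{k})/h$, so that (\ref{ode}) becomes the autonomous ODE $\frac{dy}{ds}=V(y)X_{t_{k},t_{k+1}}$ on $[0,1]$. The Runge--Kutta update (\ref{RK}) is invariant under this rescaling because $a_{\alpha\beta}$, $b_{\alpha}$ are dimensionless. Method I is the 2-stage Gauss--Legendre scheme of classical order $4$ (hence local order $\tau=5$) and Method II has classical order $3$ (local order $\tau=4$), so the standard order-condition analysis applied to this rescaled ODE, for which $\gamma>\tau-1$ makes all required derivatives of $V$ available, yields
\begin{align*}
|Y^{h}_{k+1}-\pi(t_{k},Y^{h}_{k},x^{h})_{t_{k+1}}|\le C(\nu)|X_{t_{k},t_{k+1}}|^{\tau}\le C(\omega)\,\|\textbf{X}(\omega)\|^{\tau}_{1/p\text{-}{\rm H\ddot{o}l};[0,T]}h^{\tau/p}
\end{align*}
for any $p>2\rho$, exactly as in Step 1 of Theorem \ref{midrate}. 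Plugging this into the same telescoping bound (via Theorem 10.30 of \cite{Friz}) as in Step 2 of Theorem \ref{midrate} produces a global bound $\max_{k}|Y^{h}_{k}-y^{h}_{t_{k}}|\le C(\omega)h^{\tau/p-1}$, which, letting $p\downarrow 2\rho$, gives any rate $\eta<\frac{\tau}{2\rho}-1$.

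It then remains to combine the two pieces and to establish symplecticity. An elementary comparison shows $\frac{\tau}{2\rho}-1\ge\frac{1}{\rho}-\frac{1}{2}$ whenever $\tau\ge 4$ and $\rho\in[1,2)$, so for both methods the Wong--Zakai estimate is the binding constraint and the overall rate is $\min\{\frac{1}{\rho}-\frac{1}{2},\frac{1}{2\rho}-\frac{1}{\gamma}\}$, as stated. For the symplecticity I would invoke Theorem \ref{symcon} and directly verify the Kuntzmann--Butcher condition $a_{\alpha\beta}b_{\alpha}+a_{\beta\alpha}b_{\beta}=b_{\alpha}b_{\beta}$ on each tableau; this is classical for the 2-stage Gauss--Legendre method, and for Method II it is a short algebraic check relying on the defining relation $6a^{3}-12a^{2}+6a-1=0$.

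The step I expect to be the main obstacle is the local-order statement for the Runge--Kutta scheme applied to (\ref{ode}). One must carefully track how the $h^{-1}$ factor in $\dot{x}^{h}=X_{t_{k},t_{k+1}}/h$ combines with the $h^{j}$ factors from the usual Taylor/B-series expansion of the one-step error so that, after all the order conditions of the tableau cancel the lower-order elementary differentials, what survives is a clean power $|X_{t_{k},t_{k+1}}|^{\tau}$ instead of $h^{\tau}$. For the midpoint rule this can be done by the explicit expansion in Theorem \ref{midrate}, but for the 2-stage Gauss method and the 3-stage DIRK of Method II one really needs the full classical order conditions, together with the regularity hypothesis $\gamma>\tau-1$ used to close the remainder terms involving $D^{\lfloor\tau\rfloor}V$.
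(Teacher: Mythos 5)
Your proposal is correct and follows essentially the same route as the paper: the same splitting (\ref{twoparts}), the classical local order $\tau=5$ for the Gauss method and $\tau=4$ for Method II yielding (\ref{scheor}) with exponent $\frac{\tau}{2\rho}-1$, combination with Theorem \ref{WZk}, and verification of the symplectic condition of Theorem \ref{symcon}. The only difference is one of detail: the paper simply cites \cite{GeometricEH} for the order statement, whereas you spell out the rescaling and telescoping arguments that justify that citation.
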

\begin{proof}
According to \cite{GeometricEH}, we know that the corresponding $\eta$ in (\ref{scheor}) belongs to $(0,\frac{5}{2\rho}-1)$ for Method \uppercase\expandafter{\romannumeral1} and $(0,\frac{2}{\rho}-1)$ for Method \uppercase\expandafter{\romannumeral2}. This result together with Theorem \ref{WZk} finally leads to the global error estimate. Moreover, the coefficients satisfy the symplectic condition in Theorem \ref{symcon}.
\end{proof}

\begin{rk}
This convergence rate equals to the simplified step-3 Euler scheme for $\gamma$ large enough. Actually, using Runge--Kutta methods with larger $\tau$ will not improve the convergence rate since the error of piecewise linear approximations persists.
\end{rk}

The next proposition is essential to get the convergence rate of the midpoint scheme in the linear case.

\begin{prop}\label{solb}
If $V$ in (\ref{sdeH}) is a collection of skew symmetric linear vector fields of the form $V_{i}(Y)=A_{i}Y$, then numerical solutions from the midpoint scheme (\ref{midpoint}) are uniformly bounded. More precisely, $|Y^{h}_{k}|=|z|$, $k=1,...,N$.
\end{prop}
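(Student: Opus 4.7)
The plan is to exploit the well-known algebraic identity satisfied by the midpoint rule together with skew-symmetry of the driving matrices, in direct analogy with the conservation-of-energy property for linear Hamiltonian systems in the deterministic case.

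First I would rewrite the two equations in \eqref{midpoint} to eliminate the stage value. Subtracting the first from the second gives $Y^h_{k+1/2} = (Y^h_k + Y^h_{k+1})/2$, so the stage is literally the midpoint of consecutive numerical iterates. Substituting this back, the update becomes
\begin{align*}
Y^h_{k+1} - Y^h_k = V(Y^h_{k+1/2})X_{t_k,t_{k+1}} = A\,\tfrac{Y^h_k + Y^h_{k+1}}{2},
\end{align*}
where $A := \sum_{i=0}^d A_i X^i_{t_k,t_{k+1}}(\omega)$ is a finite sum of skew-symmetric matrices, hence itself skew-symmetric, i.e. $A^\top = -A$. (The existence of $Y^h_{k+1}$ is guaranteed by Proposition \ref{linearcase}, so this manipulation is justified.)

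Next I would compute the change in squared norm using the classical telescoping trick
\begin{align*}
|Y^h_{k+1}|^2 - |Y^h_k|^2 = (Y^h_{k+1} - Y^h_k)^\top (Y^h_{k+1} + Y^h_k) = \tfrac{1}{2}(Y^h_k + Y^h_{k+1})^\top A^\top (Y^h_k + Y^h_{k+1}),
\end{align*}
and this quadratic form vanishes identically because $A$ is skew-symmetric ($v^\top A v = 0$ for all $v$). Therefore $|Y^h_{k+1}| = |Y^h_k|$, and a straightforward induction starting from $Y^h_0 = z$ yields $|Y^h_k| = |z|$ for every $k=1,\dots,N$.

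There is essentially no obstacle: the whole argument is two lines once one notices that the stage value is the arithmetic mean of the two nodal values. The only small point to mention is that the derivation is independent of the particular realization $\omega$, since skew-symmetry of $A$ is preserved under any real-linear combination of the $A_i$'s; in particular the bound $|Y^h_k| = |z|$ holds pathwise for every $\omega$ and every step size $h>0$ for which the scheme is solvable (which is all $h>0$, by Proposition \ref{linearcase}).
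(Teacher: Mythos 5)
Your proposal is correct and rests on exactly the same idea as the paper's proof: the quadratic form of a skew-symmetric matrix vanishes at the stage value $Y^{h}_{k+1/2}$, so the squared norm is conserved step by step. The only difference is presentational — you use the identity $Y^{h}_{k+1/2}=(Y^{h}_{k}+Y^{h}_{k+1})/2$ together with the factorization $|Y^{h}_{k+1}|^{2}-|Y^{h}_{k}|^{2}=(Y^{h}_{k+1}-Y^{h}_{k})^\top(Y^{h}_{k+1}+Y^{h}_{k})$, whereas the paper substitutes $Y^{h}_{k}=Y^{h}_{k+1/2}-\frac{1}{2}\sum_{i}A_{i}Y^{h}_{k+1/2}X^{i}_{t_{k},t_{k+1}}$ and expands $|Y^{h}_{k+1}|^{2}$ directly, both computations reducing to $(Y^{h}_{k+1/2})^\top A_{i}Y^{h}_{k+1/2}=0$.
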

\begin{proof}
For any $k=0,...,N-1$, since $Y^{h}_{k}=Y^{h}_{k+1/2}-\frac{1}{2}\sum^{d}_{i=0}A_{i}Y^{h}_{k+1/2}X_{t_{k},t_{k+1}}^{i},$ we have
\begin{align*}
|Y^{h}_{k+1}|^{2}=&|Y^{h}_{k}|^{2}
+ \sum^{d}_{i=0}(Y^{h}_{k+1/2})^\top A_{i} ^\top Y^{h}_{k} X_{t_{k},t_{k+1}}^{i} + \sum^{d}_{i=0}(Y^{h}_{k})^\top A_{i}Y^{h}_{k+1/2} X_{t_{k},t_{k+1}}^{i}\\
&+\sum^{d}_{i_{1}=0}\sum^{d}_{i_{2}=0}(Y^{h}_{k+1/2})^\top A_{i_{1}} ^\top  A_{i_{2}} Y^{h}_{k+1/2} X_{t_{k},t_{k+1}}^{i_{1}}X_{t_{k},t_{k+1}}^{i_{2}}\\
=&|Y^{h}_{k}|^{2}+2\sum^{d}_{i=0}(Y^{h}_{k+1/2})^\top  A_{i}Y^{h}_{k+1/2} X_{t_{k},t_{k+1}}^{i}.
\end{align*}
Then, $|Y^{h}_{k+1}|^{2}=|Y^{h}_{k}|^{2}=|z|$ from $(Y^{h}_{k+1/2})^\top A_{i} Y^{h}_{k+1/2}=0$.
\end{proof}

\begin{tm}\label{midlinear}
Suppose $\rho \in [1,3/2)$. Let $V$ be as in Proposition \ref{solb} and $Y_{k}^{h}$ represents the numerical solution to the midpoint scheme. If $\theta\geq|z|$, $\nu\geq\max\{|A_{i}|:i=0,...,d\}$, then for any $\gamma>2\rho$, $0<\eta < \min\{\frac{3}{2\rho}-1,\frac{1}{2\rho}-\frac{1}{\gamma}\}$, there exists a finite random variable $C(\omega)$ and a null set $M$ such that
\begin{align*}
\max_{k=1,...,N}|Y^{h}_{k}(\omega)-Y_{t_{k}}(\omega)|\leq C(\omega)h^{\eta},\quad \forall\ \omega \in \Omega\setminus M
\end{align*}
holds for any $0<h\leq 1$. $C(\omega)$ depends on $\rho$, $\eta$, $\gamma$, $\nu$, $\theta$, $K$ and $T$.
\end{tm}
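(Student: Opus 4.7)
The plan is to reduce the linear skew-symmetric setting to the $Lip^{\gamma}$ setting already handled by Theorem \ref{midrate}, via a pathwise localization argument whose feasibility hinges on Proposition \ref{solb} and on the uniform bounds contained in Theorem \ref{WZk}.

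First, I would split the error in the same manner as in (\ref{twoparts}):
\begin{align*}
\max_{k=1,\dots,N}|Y^{h}_{k}(\omega)-Y_{t_{k}}(\omega)| \leq \max_{k=1,\dots,N}|Y^{h}_{k}(\omega)-y^{h}_{t_{k}}(\omega)| + \sup_{t\in[0,T]}|y^{h}_{t}(\omega)-Y_{t}(\omega)|.
\end{align*}
The second summand is directly handled by the linear case of Theorem \ref{WZk}, which delivers the rate $\min\{\frac{1}{\rho}-\frac{1}{2},\frac{1}{2\rho}-\frac{1}{\gamma}\}$. For $\rho\in[1,3/2)$ this rate dominates $\frac{3}{2\rho}-1$, exactly as observed at the end of the proof of Theorem \ref{midrate}, so it suffices to focus on the first summand.

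Second, I would collect the $\omega$-pathwise a priori bounds needed for localization. Proposition \ref{solb} gives $|Y^{h}_{k}(\omega)|=|z|$ for every $k$, uniformly in $h$. Using the midpoint identity $Y^{h}_{k+1/2}=\tfrac{1}{2}(Y^{h}_{k}+Y^{h}_{k+1})$, which follows directly from (\ref{midpoint}), the internal stages also satisfy $|Y^{h}_{k+1/2}(\omega)|\leq|z|$. Meanwhile, the linear part of Theorem \ref{WZk} (through bound (\ref{Y})) produces a radius $R=R(\omega,\rho,\theta,\nu,T)$ such that $|y^{h}_{t}(\omega)|\vee|Y_{t}(\omega)|\leq R$ for every $h>0$ and $t\in[0,T]$. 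Setting $R'(\omega):=\max\{R(\omega),|z|\}$, all the relevant trajectories and stages lie in the ball $B_{2R'}$.

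Third, I would localize. Following the template used in the linear case of Theorem \ref{well} and Theorem \ref{WZk}, I would replace $V$ by a compactly supported $\tilde V\in Lip^{\gamma}$ that coincides with $V$ on $B_{2R'}$, with $|\tilde V|_{Lip^{\gamma}}$ depending on $R'(\omega)$ and $\nu$. Since both the iterates $Y^{h}_{k},Y^{h}_{k+1/2}$ and the continuous approximation $y^{h}_{t}$ stay inside $B_{2R'}$, running the midpoint scheme and the Wong--Zakai ODE with $\tilde V$ in place of $V$ does not alter any of these objects. The localized system fits exactly into the hypotheses of Theorem \ref{midrate}, which yields, for any $0<\eta<\min\{\tfrac{3}{2\rho}-1,\tfrac{1}{2\rho}-\tfrac{1}{\gamma}\}$,
\begin{align*}
\max_{k=1,\dots,N}|Y^{h}_{k}(\omega)-y^{h}_{t_{k}}(\omega)|\leq C(\omega)h^{\eta},
\end{align*}
with $C(\omega)$ depending on $\rho,\eta,\gamma,\nu,\theta,K,T$ through $R'(\omega)$. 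Combined with step one, this delivers the claimed bound.

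The main obstacle is the legitimacy of the localization for an \emph{implicit} scheme: I must guarantee that the nonlinear equation defining $Y^{h}_{k+1/2}$ for $\tilde V$ has the \emph{same} solution as for $V$, rather than merely some solution in $B_{2R'}$. Here the norm-preservation $|Y^{h}_{k+1}|=|Y^{h}_{k}|$ from Proposition \ref{solb} is crucial, since it is derived directly from skew-symmetry and holds for any implicit solution, so the stage built with $\tilde V$ starting from a given $Y^{h}_{k}$ automatically lands inside $B_{2R'}$ and there solves the same implicit equation as for $V$. Once this point is secured, the reduction to Theorem \ref{midrate} is clean, and the only randomness-dependent constants are absorbed into $C(\omega)$.
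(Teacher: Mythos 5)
Your proposal follows essentially the same route as the paper's proof: split the error as in (\ref{twoparts}), treat the Wong--Zakai part by the linear case of Theorem \ref{WZk} (noting $\frac{1}{\rho}-\frac{1}{2}\geq\frac{3}{2\rho}-1$ for $\rho\in[1,3/2)$), and use the uniform bound $|Y^{h}_{k}|=|z|$ from Proposition \ref{solb} to localize $V$ by a compactly supported $\tilde V\in Lip^{\gamma}$ and invoke Theorem \ref{midrate}. One small repair: instead of arguing that a stage computed with $\tilde V$ ``automatically lands'' in $B_{2R'}$ (skew-symmetry is not available for $\tilde V$ outside that ball, so norm preservation cannot be invoked for an arbitrary $\tilde V$-solution), it is cleaner to note that the original $V$-iterates and stages, all bounded by $|z|\leq R'$, already satisfy the $\tilde V$-scheme relations and $y^{h}$, $Y$ solve the $\tilde V$-equations, so the estimate of Theorem \ref{midrate} applies to them directly---a cosmetic adjustment that does not affect the correctness of your argument.
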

\begin{proof}
Proposition \ref{solb} implies that there exists some constant $R>0$ depending on $\omega$, $\rho$, $\theta$, $\eta$, $\nu$ and $T$, such that $|\pi(t_{k-1},Y^{h}_{k-1},x^{h})_{t}|\leq R$ for all $h$, $k$, $t$. Based on this result, we know that the localization skill is applicable.
Combining Theorem \ref{WZk} and Theorem \ref{midrate}, we complete the proof.
\end{proof}

\section{Numerical experiments}\label{sec6}
In this section, we illustrate our theoretical results by two examples. In the first example, we verify the theoretical convergence rate given in Corollary \ref{RKrate}. In the second one, we consider the Kubo oscillator and compare the performance of the midpoint scheme, the simplified step-$2$ Euler scheme and the simplified step-$3$ Euler scheme.

\subsection{Example 1}
We consider a rough Hamiltonian system in $\R^{2}$ driven by a two-dimensional fBm with independent components and suppose
\begin{align*}
H_{0}(P,Q)=\sin(P)\cos(Q),\ H_{1}(P,Q)=\cos(P),\ H_{2}(P,Q)=\sin(Q).
\end{align*}
More precisely, the corresponding equations are
\begin{equation*}
\begin{split}
dP&=\sin(P)\sin(Q)dt-\cos(Q)dX^{2},\quad P_{0}=p,\\
dQ&=\cos(P)\cos(Q)dt-\sin(P)dX^{1},\quad Q_{0}=q.
\end{split}
\end{equation*}

{\it Convergence rate}. Since the vector fields are bounded with bounded derivatives, the theoretical convergence rates are almost $0.3,0.2,0.1$ for both Method \uppercase\expandafter{\romannumeral1} and Method \uppercase\expandafter{\romannumeral2} in Corollary \ref{RKrate} when the Hurst parameter $H$ are $0.4,0.35,0.3$ respectively. Figure \ref{Hboundorder} shows the maximum error in the discretisation points,  $\max_{k=1,...,N}|Y^{h}_{k}(\omega)-Y_{t_{k}}(\omega)|$, which we call the pathwise maximum error for short, for Method \uppercase\expandafter{\romannumeral1}. Pictures in the same row are from three sample paths. Numerical results are in acceptable accordance with theoretical analysis. In addition, the fact that the error becomes larger as $H$ increases indicates the influence of the roughness of the noise.

\subsection{Example 2}
For the linear case, we consider the Kubo oscillator driven by a three-dimensional fBm with independent components and $H=0.4$:
\begin{equation}\label{Kubo}
\begin{split}
dP=&-Qdt-\epsilon\sum^{d}_{i=1}Qd X^{i},\quad P_{0}=p,\\
dQ=&Pdt+\epsilon\sum^{d}_{i=1}Pd X^{i},\quad Q_{0}=q.
\end{split}
\end{equation}

We have the expression of the exact solution
$$P=p\cos(\sigma_{0}t+\epsilon\sum^{d}_{i=1}\sigma_{i}X_{t}^{i})-q\sin(\sigma_{0}t+\epsilon\sum^{d}_{i=1}\sigma_{i}X_{t}^{i}),$$
$$Q=q\cos(\sigma_{0}t+\epsilon\sum^{d}_{i=1}\sigma_{i}X_{t}^{i})+p\sin(\sigma_{0}t+\epsilon\sum^{d}_{i=1}\sigma_{i}X_{t}^{i}).$$

\begin{figure}[H]
\centering
 \subfigure[$H=0.4$]{
\begin{minipage}[t]{0.3\linewidth}
  \includegraphics[height=4.7cm,width=4.7cm]{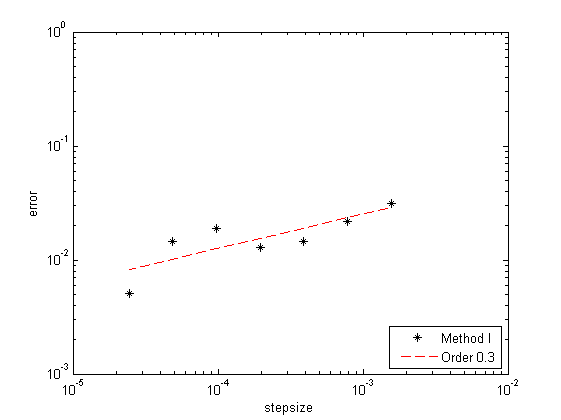}
 \end{minipage}
 }
  \subfigure[$H=0.4$]{
\begin{minipage}[t]{0.3\linewidth}
  \includegraphics[height=4.7cm,width=4.7cm]{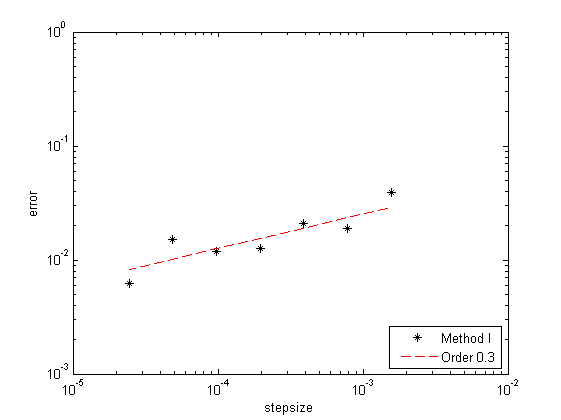}
  \end{minipage}
  }
 \subfigure[$H=0.4$]{
\begin{minipage}[t]{0.3\linewidth}
  \includegraphics[height=4.7cm,width=4.7cm]{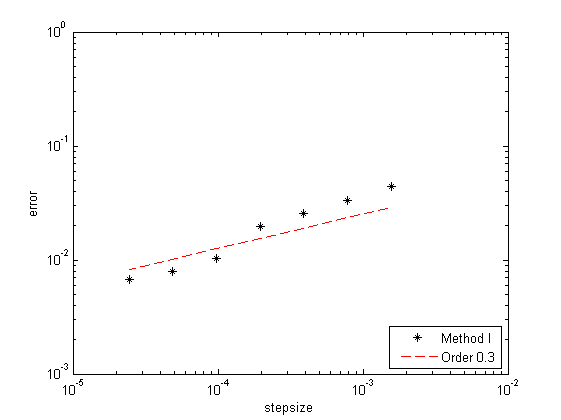}
 \end{minipage}
 }
\centering
 \subfigure[$H=0.35$]{
\begin{minipage}[t]{0.3\linewidth}
  \includegraphics[height=4.7cm,width=4.7cm]{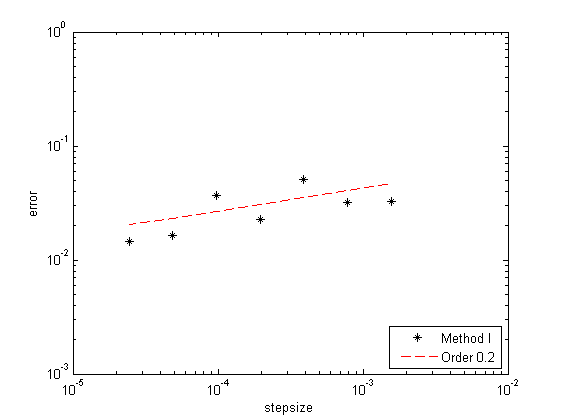}
 \end{minipage}
 }
 \subfigure[$H=0.35$]{
\begin{minipage}[t]{0.3\linewidth}
  \includegraphics[height=4.7cm,width=4.7cm]{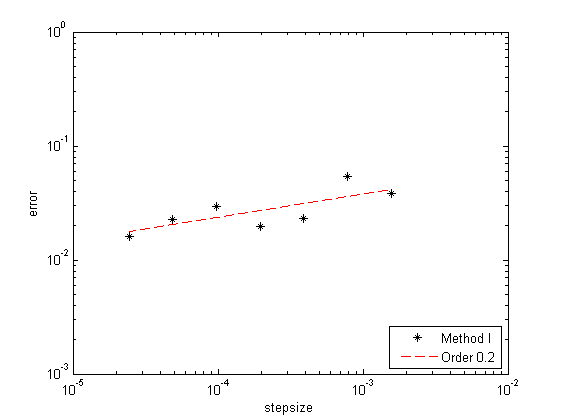}
 \end{minipage}
 }
 \subfigure[$H=0.35$]{
\begin{minipage}[t]{0.3\linewidth}
  \includegraphics[height=4.7cm,width=4.7cm]{H035p3.png}
 \end{minipage}
 }
\centering
 \subfigure[$H=0.3$]{
\begin{minipage}[t]{0.3\linewidth}
  \includegraphics[height=4.7cm,width=4.7cm]{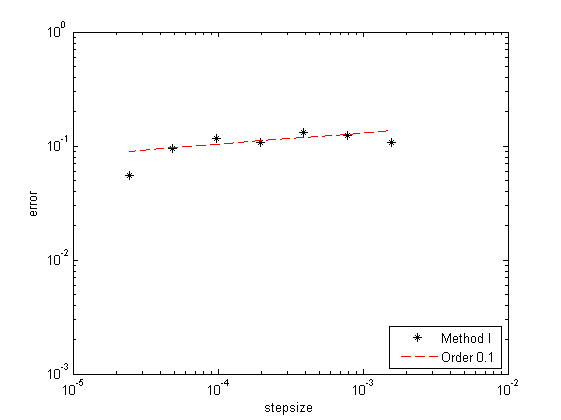}
 \end{minipage}
 }
 \subfigure[$H=0.3$]{
\begin{minipage}[t]{0.3\linewidth}
  \includegraphics[height=4.7cm,width=4.7cm]{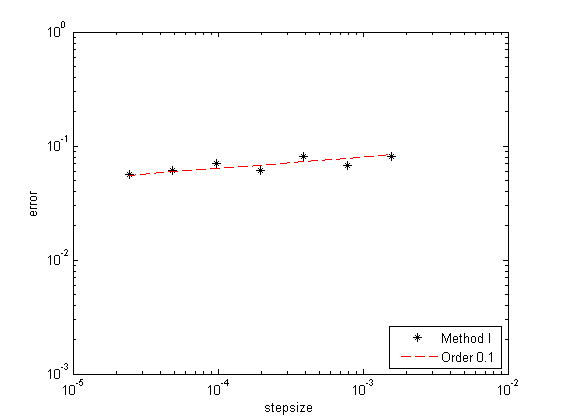}
 \end{minipage}
 }
 \subfigure[$H=0.3$]{
\begin{minipage}[t]{0.3\linewidth}
  \includegraphics[height=4.7cm,width=4.7cm]{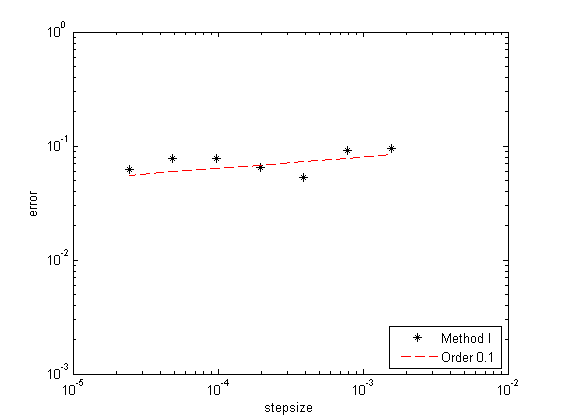}
 \end{minipage}
 }
\caption{Pathwise maximum error vs. step size for three sample paths with $p=1$, $q=2$ and $T=0.1$.}\label{Hboundorder}
\end{figure}

Denoting $Y=(P,Q)^\top\in\R^{2}$ and $X^{0}:=t$, we have that (\ref{Kubo}) is equivalent to $dY=\sum^{d}_{i=0}A_{i}YdX^{i}$, where $A_{0}$=$\left( \begin{array}{cc}
0 & -1 \\
1 &0\end{array} \right)$ and $A_{i}$=$\left( \begin{array}{cc}
0 & -\epsilon \\
\epsilon &0\end{array} \right)$, $i=1,2,3$. Since $A_{i}$ are all skew symmetric, the midpoint scheme is solvable. We compare it with the simplified step-$2$ and step-$3$ Euler schemes in the following experiments.

\begin{figure}[H]
\centering
  \includegraphics[height=7.5cm,width=8.5cm]{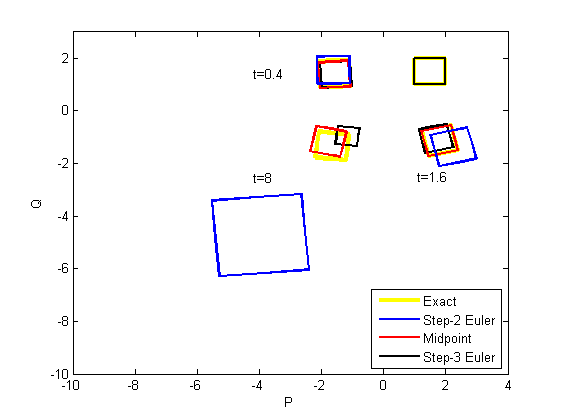}
\caption{Evolution of domains in the phase plane for one sample path with $\epsilon=1.5$, $T=10$ and $h=0.002$.}\label{sympic}
\end{figure}
\begin{figure}[H]
\centering
  \subfigure[]{
\begin{minipage}[t]{0.3\linewidth}
  \includegraphics[height=4.7cm,width=4.7cm]{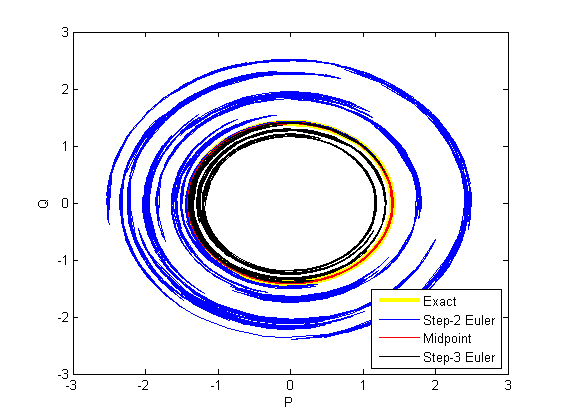}
  \end{minipage}
  }
 \subfigure[]{
\begin{minipage}[t]{0.3\linewidth}
  \includegraphics[height=4.7cm,width=4.7cm]{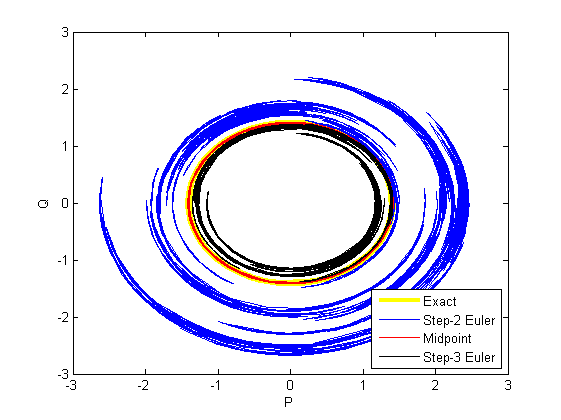}
 \end{minipage}
 }
 \subfigure[]{
\begin{minipage}[t]{0.3\linewidth}
  \includegraphics[height=4.7cm,width=4.7cm]{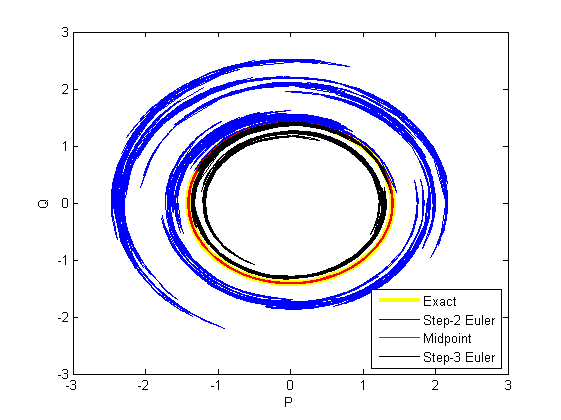}
 \end{minipage}
 }
\caption{Phase trajectory for three sample paths with $p=1$, $q=1$, $\epsilon=1$, $T=10$ and $h=10^{-12}$.}\label{Kubophase}
\end{figure}

{\it Evolution of domains}. Figure \ref{sympic} shows the evolution of domains in the phase plane for one sample path. The initial domain is a square with four corners at $(1,1)$, $(2,1)$, $(2,2)$ and $(1,2)$. Images at $t=0.4,1.6,8$, are presented under the exact mapping and the three numerical methods. One can observe that the images are still squares. The exact mapping is area preserving, which is equivalent to that it preserves the symplectic structure of the system. This property is inherited by the midpoint scheme, since the area of red squares are the same. However, the area of images under the simplified step-$2$ and step-$3$ Euler schemes increase and decrease respectively, which proves that they are not symplectic methods.

\begin{figure}[H]
\centering
 \subfigure[$\epsilon=1$, $T=1$]{
\begin{minipage}[t]{0.3\linewidth}
  \includegraphics[height=4.7cm,width=4.7cm]{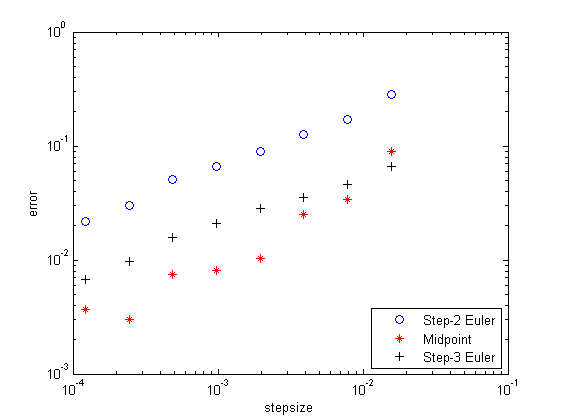}
 \end{minipage}
 }
 \subfigure[$\epsilon=1$, $T=1$]{
\begin{minipage}[t]{0.3\linewidth}
  \includegraphics[height=4.7cm,width=4.7cm]{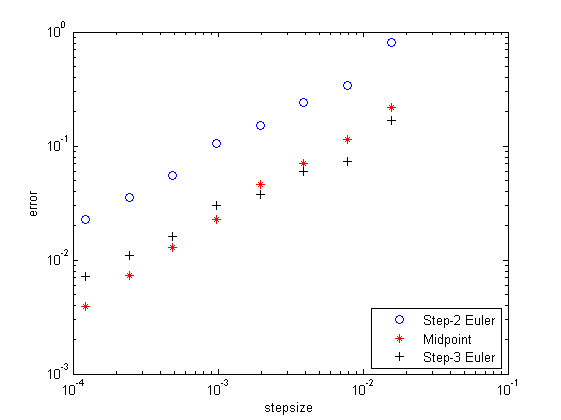}
 \end{minipage}
 }
 \subfigure[$\epsilon=1$, $T=1$]{
\begin{minipage}[t]{0.3\linewidth}
  \includegraphics[height=4.7cm,width=4.7cm]{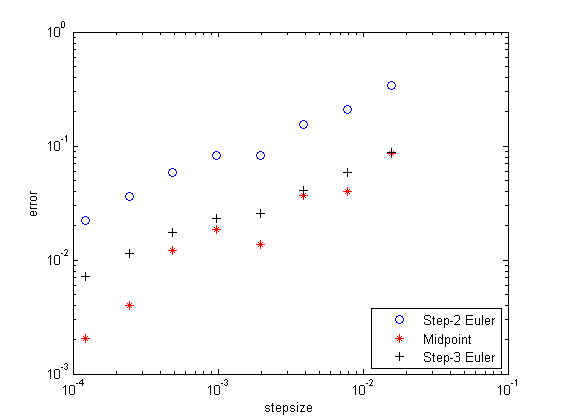}
 \end{minipage}
 }
  \subfigure[$\epsilon=2$, $T=1$]{
\begin{minipage}[t]{0.3\linewidth}
  \includegraphics[height=4.7cm,width=4.7cm]{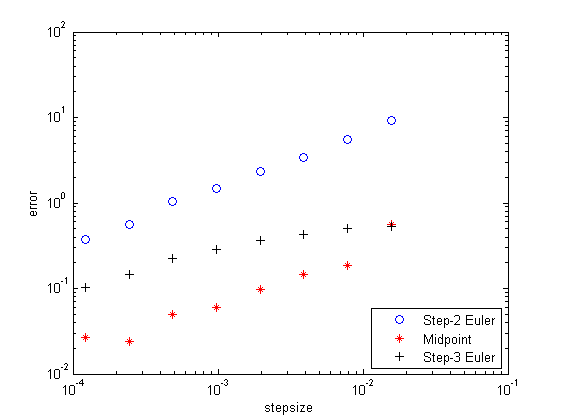}
  \end{minipage}
  }
 \subfigure[$\epsilon=2$, $T=1$]{
\begin{minipage}[t]{0.3\linewidth}
  \includegraphics[height=4.7cm,width=4.7cm]{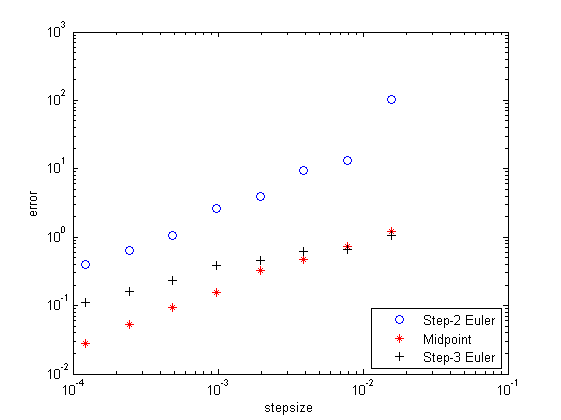}
 \end{minipage}
 }
 \subfigure[$\epsilon=2$, $T=1$]{
\begin{minipage}[t]{0.3\linewidth}
  \includegraphics[height=4.7cm,width=4.7cm]{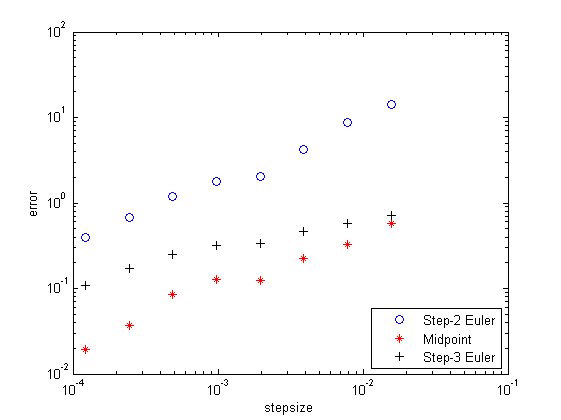}
 \end{minipage}
 }
  \subfigure[$\epsilon=1$, $T=10$]{
\begin{minipage}[t]{0.3\linewidth}
  \includegraphics[height=4.7cm,width=4.7cm]{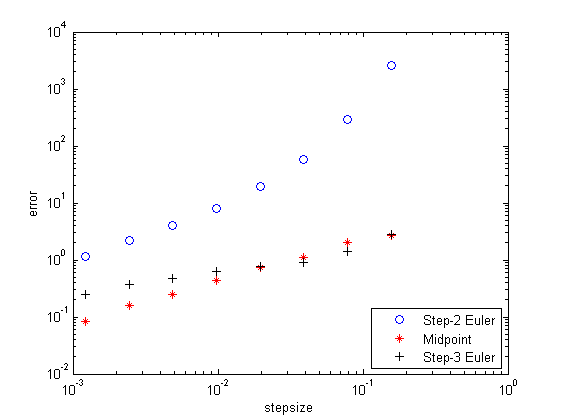}
  \end{minipage}
  }
 \subfigure[$\epsilon=1$, $T=10$]{
\begin{minipage}[t]{0.3\linewidth}
  \includegraphics[height=4.7cm,width=4.7cm]{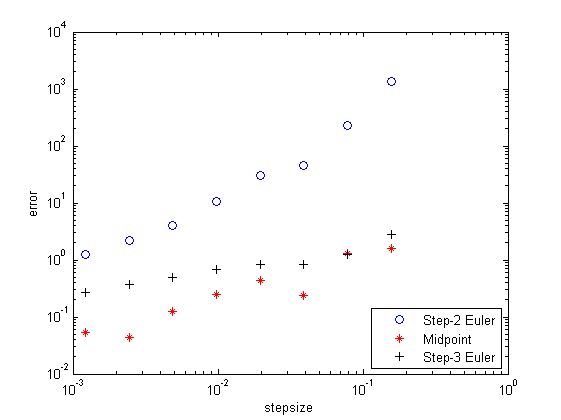}
 \end{minipage}
 }
 \subfigure[$\epsilon=1$, $T=10$]{
\begin{minipage}[t]{0.3\linewidth}
  \includegraphics[height=4.7cm,width=4.7cm]{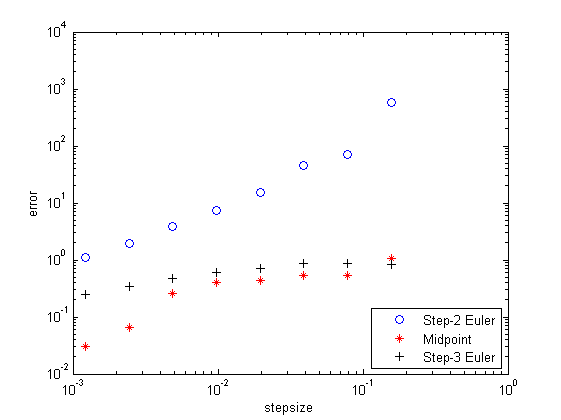}
 \end{minipage}
 }
\caption{Pathwise maximum error vs. step size for three sample paths with $p=1$ and $q=1$.}\label{Kuboorder}
\end{figure}

{\it Conservation of quadratic invariant}. For the initial value $(1,1)$, the exact solution is on the circle with center at the origin and radius $r=\sqrt{2}$, which implies another invariant of this system (see also \cite[Example 1]{MLMC16}). We present the phase trajectory for three sample paths in Figure \ref{Kubophase}. The midpoint scheme preserves this property such that its solutions are always on that circle, which is proved in Proposition \ref{solb}. The phase trajectory of the simplified step-$2$ Euler scheme deviates from the exact one a lot and that of the simplified step-$3$ Euler scheme shrinks gradually. Figure \ref{sympic} and Figure \ref{Kubophase} verify that the midpoint scheme, as a symplectic Runge--Kutta method, preserves the invariants of the system (\ref{Kubo}) and show its long time behavior is better than another two methods.

{\it Stability for $\epsilon$ and $T$}. We investigate the influence of the size $\epsilon$ of the noise and the total time $T$ in Figure \ref{Kuboorder}. When $T=10$, the simplified step-$2$ scheme is unstable. The pathwise maximum error of the midpoint scheme is the smallest among the three methods. As $\epsilon$ and $T$ become larger, this advantage is more clearer. This result implies the stability of an implicit method.

\bibliographystyle{plain}
\bibliography{implicit}

\end{document}